\def\Lifts{{\cal L}}
\def\Forb{\mathop{\mathrm{Forb_h}}\nolimits}
\def\Forbi{\mathop{\mathrm{Forb_e}}\nolimits}
\def\relS{R_{\mathbf{S}}}
\def\Age{\mathop{\mathrm{Age}}\nolimits}
\def\Aut{\mathop{\mathrm{Aut}}\nolimits}
\def\Rel{\mathop{\mathrm{Rel}}\nolimits}
\def\CSP{\mathop{\mathrm{CSP}}\nolimits}
\def\relsys#1{\mathbf {#1}}
\def\extsys#1{\mathbf #1'}
\def\rel#1#2{R_{\mathbf{#1}}^{#2}}
\def\ext#1#2{X_{\mathbf{#1}}^{#2}}
\def\F{{\cal F}}
\def\M{{\cal M}}
\def\K{{\cal K}}
\def\Piece{{\cal P}}
\def\U{{\cal U}}
\def\UU{{\relsys U}}
\def\sh{\mathop{\mathrm{Sh}}\nolimits}
\def\Fraisse{Fra\"{\i}ss\' e}
\newtheorem{defn}{Definition}[section]
\newtheorem{corollary}{Corollary}[section]
\newtheorem{observation}{Observation}[section]
\newtheorem{prop}{Proposition}[section]
\newtheorem{thm}{Theorem}[section]
\newtheorem{lem}[thm]{Lemma} 
\DeclareRobustCommand{\qed}{%
  \ifmmode \mathqed
  \else
    \leavevmode\unskip\penalty9999 \hbox{}\nobreak\hfill
    \quad\hbox{$\square$}%
  \fi
}
\let\QED@stack\@empty
\let\qed@elt\relax
\newcommand{\pushQED}[1]{%
  \toks@{\qed@elt{#1}}\@temptokena\expandafter{\QED@stack}%
  \xdef\QED@stack{\the\toks@\the\@temptokena}%
}
\newcommand{\popQED}{%
  \begingroup\let\qed@elt\popQED@elt \QED@stack\relax\relax\endgroup
}
\def\popQED@elt#1#2\relax{#1\gdef\QED@stack{#2}}
\newcommand{\qedhere}{%
  \begingroup \let\mathqed\math@qedhere
    \let\qed@elt\setQED@elt \QED@stack\relax\relax \endgroup
}
\providecommand{\proofname}{Proof}
\newenvironment{proof}[1][\proofname]{\par
  \pushQED{\qed}%
  \normalfont \topsep6\p@\@plus6\p@\relax
  \trivlist
  \item[\hskip\labelsep \bf {#1\ignorespaces.}]\ignorespaces
}{%
\popQED\endtrivlist
\par
}
\begin{document}
\bibliographystyle{plain}

\title{Homomorphism and embedding universal structures for restricted classes}

\author{
   \normalsize {\bf Jan Hubi\v cka$^*$}\\
   \normalsize {\bf Jaroslav Ne\v set\v ril}\thanks {The Computer Science Institute of Charles University (IUUK) is supported by grant ERC-CZ LL-1201 of the Czech Ministry of Education and CE-ITI P202/12/G061 of GA\v CR.}\\
{\small Computer Science Institute of Charles University (IUUK)}\\
{\small and}\\
{\small Institute of Theoretical Computer sciences (ITI)}\\
   {\small Charles University}\\
   {\small Malostransk\' e n\' am. 25, 11800 Praha, Czech Republic}\\
   {\normalsize 118 00 Praha 1}\\
   {\normalsize Czech Republic}\\
   {\small \{hubicka,nesetril\}@iuuk.ms.mff.cuni.cz}
}

\date{}
\maketitle
\begin{abstract}
This paper unifies problems and results related to
(embedding) universal and homomorphism universal structures.  On the one side
we give a new combinatorial proof of the existence of universal objects for
homomorphism defined classes of structures (thus reproving a result of
Cherlin, Shelah and Shi) and on the other side this leads to the new proof of
the existence of dual objects (established by Ne{\v{s}}et{\v{r}}il and Tardif). Our explicite
approach has further applications to special structures such as variants of
the rational Urysohn space.  We also solve a related extremal problem which shows the optimality (of the used lifted arities) of our construction (and a related problem of A.~Atserias).
\end{abstract}
\section {Introduction}
It is an old mathematical idea  to reduce a study of a particular class of objects to a certain single ``universal'' object. It is hoped that this object may then  be used to study the given (infinite) set of individual problems in a more systematic and perhaps even efficient way. For example the universal object may have interesting additional properties (such as symmetries and ultrahomogeneity) which in turn can be used to classify the finite problems.
A good example of this is the classification of Ramsey classes of finite structures via the classification program of generic structures \cite{Nevsetvril1989a,Nevsetvril1995,Kechris2005,Hubicka2005a}. In this paper we deal with
universal objects from the homomorphism point of view. We define and study homomorphism-universal (shortly hom-universal \cite{Nesetril1989})
objects and show their relationship to embedding-universal (shortly universal) objects:

Given a class $\K$ of countable structures, an object $\UU\in\K$ is called {\em hom-universal} (or {\em universal}) if for every object $\relsys{A}\in \K$ there exists a homomorphism (or an embedding) $\relsys{A}\to \UU$.

The characterisation of those classes $\K$ which have an universal (or hom-universal) object is a well known  open problem which was studied intensively, see e.g. \cite{Komjath1999,Komjath1988,Cherlin1996,Nesetril1989,Nesetril2000}. The whole area was inspired by the negative results (see \cite{Hajnal1984,Cherlin1994}): for example the class of graphs not containing $C_l$ (= cycle of length $l$) fails to be universal for any $l>3$.  For universal structures the strongest results in the positive direction were obtained by Cherlin, Shelah and Shi in \cite{Cherlin1999}. Particularly, they proved the following (as consequence of Theorem 4 \cite{Cherlin1999}):

\begin{thm}
\label{1thm}
For every finite set $\relsys{F}_1, \relsys{F}_2, \ldots,\relsys{F}_t$ of finite
connected graphs  the class
$\Forb(\relsys{F}_1, \relsys{F}_2,\ldots,\relsys{F}_t)$ has an universal object.
\end{thm}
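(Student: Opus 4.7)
My plan is to expand the graph signature by auxiliary relation symbols — called \emph{lifts} — so that $\Forb(\relsys{F}_1,\ldots,\relsys{F}_t)$, enriched to the larger language, becomes closed under amalgamation and hence forms a \Fraisse\ class; the \emph{shadow} (reduct to the graph language) of its \Fraisse\ limit will be the required universal object.

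First, for each forbidden graph $\relsys{F}_i$ and each \emph{piece} $P$ of $\relsys{F}_i$ — a connected subgraph together with a distinguished boundary tuple $\bar{x}_P$ whose removal separates $P$ from $\relsys{F}_i \setminus P$ — I would introduce a relation symbol $R_{i,P}$ of arity $|\bar{x}_P|$. The intended meaning in a lifted graph $\relsys{A}^+$ is that $R_{i,P}(\bar{a})$ marks the existence of a homomorphism $P \to \relsys{A}$ carrying $\bar{x}_P$ to $\bar{a}$. The class $\Lifts$ is then the class of finite lifts $\relsys{A}^+$ such that $\relsys{A} \in \Forb(\relsys{F}_1,\ldots,\relsys{F}_t)$, every marked $R_{i,P}$-tuple is witnessed by a piece-homomorphism in $\relsys{A}$, and — crucially — no compatible family of $R_{i,P}$-marks collectively describes a complete decomposition of any $\relsys{F}_i$ (else $\relsys{A}$ would itself admit a homomorphism from $\relsys{F}_i$).

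The technical heart is verifying that $\Lifts$ has the amalgamation property, taking amalgams to be the free amalgam of the shadow together with the union of the $R_{i,P}$-relations on the two sides. If the shadow of such an amalgam $\relsys{C}$ admitted a homomorphism $h : \relsys{F}_i \to \relsys{C}$, then, since $\relsys{F}_i$ is connected and no $\relsys{C}$-edges cross the amalgamation cut outside $\relsys{B}$, the preimage $h^{-1}(\relsys{B})$ would be a vertex cut of $\relsys{F}_i$ decomposing it into pieces mapped wholly into $\relsys{A}$ or wholly into $\relsys{A}'$, each piece's boundary landing in $\relsys{B}$. Every such piece-homomorphism would be recorded by $R_{i,P}$ in $\relsys{A}^+$ or in $(\relsys{A}')^+$, and hence in the amalgam, producing a complete $\relsys{F}_i$-decomposition all of whose boundary tuples lie in $\relsys{B}$. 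The principal obstacle is to tune the admissibility conditions on $\Lifts$ so as to already forbid such boundary-compatible complete decompositions — in effect, the shared lift $\relsys{B}^+$ must carry enough information to preclude reassembling $\relsys{F}_i$ across the cut; the finiteness and connectedness of each $\relsys{F}_i$ ensure that only finitely many piece-relations, of bounded arity, need be introduced.

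With amalgamation in hand, $\Lifts$ is a \Fraisse\ class — hereditariness, joint embedding, and countability up to isomorphism being routine — and admits a countable ultrahomogeneous limit $\UU^+$. The shadow $\UU$ of $\UU^+$ lies in $\Forb(\relsys{F}_1,\ldots,\relsys{F}_t)$, for any hypothetical homomorphism $\relsys{F}_i \to \UU$ would factor through a finite sub-lift of $\UU^+$ whose shadow is not in the forbidden class. Conversely, every $\relsys{A} \in \Forb(\relsys{F}_1,\ldots,\relsys{F}_t)$ admits a canonical (maximal) lift $\relsys{A}^+ \in \Lifts$; by \Fraisse\ universality it embeds into $\UU^+$, and taking shadows yields the desired embedding $\relsys{A} \hookrightarrow \UU$, establishing that $\UU$ is universal for $\Forb(\relsys{F}_1,\ldots,\relsys{F}_t)$.
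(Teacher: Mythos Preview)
Your overall strategy---lifting by piece-relations, building a \Fraisse\ class, and taking the shadow of its limit---matches the paper's exactly. But the specific definition of $\Lifts$ you give does not work. You require each marked $R_{i,P}$-tuple to be witnessed by a piece-homomorphism \emph{into the shadow $\relsys{A}$ itself}. With that clause the class is not hereditary: passing to an induced substructure may retain a marked tuple while deleting vertices used by its witnessing piece-image, so ``hereditariness \ldots\ being routine'' is simply false. Dually, your amalgamation argument asserts that every piece of $\relsys{F}_i$ landing in one factor ``would be recorded by $R_{i,P}$'', but your definition only demands the implication mark $\Rightarrow$ witness, not its converse; a piece-homomorphism into $\relsys{A}$ need not be marked in $\relsys{A}^+$ at all, so the no-complete-decomposition clause on the common part $\relsys{B}^+$ cannot be invoked to rule out $\relsys{F}_i\to\relsys{C}$.

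The paper resolves both problems with one device: it takes $\Lifts$ to be the class of induced substructures of the \emph{canonical} (maximal) lifts $L(\relsys{A})$ for $\relsys{A}\in\Forb(\F)$. Thus $\relsys{X}\in\Lifts$ iff there is an \emph{external witness} $\relsys{A}\in\Forb(\F)$ with $X\subseteq A$ and $\relsys{X}=L(\relsys{A})\!\upharpoonright\! X$. Hereditariness is then immediate (reuse the same witness), and the amalgam is obtained not by free-amalgamating the lifts and unioning the marks, but by free-amalgamating the two \emph{witnesses} to a structure $\relsys{D}$ and taking $L(\relsys{D})$. The substantive work---done by induction on piece size---is showing that $L(\relsys{D})$ restricted to either side introduces no new marks; the argument that $\relsys{D}\in\Forb(\F)$ then succeeds precisely because in a canonical lift every existing piece-homomorphism \emph{is} recorded, which is exactly the direction your definition omits.
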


Here $\Forb(\relsys{F}_1, \relsys{F}_2, \ldots, \relsys{F}_t)$ denotes the class of all countable graphs (and more generally relational structures) $\relsys{G}$ for which there is no homomorphism $\relsys{F}_i\to \relsys{G}$ for every $i=1,2,\ldots, t$. Formally $\Forb(\relsys{F}_1,\relsys{F}_2,\ldots,\relsys{F}_t)= \{\relsys{G}; \relsys{F}_i\nrightarrow \relsys{G}\mathrm{~for~}i=1,2,\ldots,t\}$.
(In this paper the structures are denoted by bold letters. This applies also to graphs. But a cycle of length $l$ is still denoted by $C_l$ and complete graph on $k$ vertices by $K_k$.)
Similarly by $\Forbi(\relsys{F}_1, \relsys{F}_2, \ldots, \relsys{F}_t)$ we
denote the class of all graphs (and more generally relational structures)
$\relsys{G}$ for which there is no embedding $\relsys{F}_i\to \relsys{G}$ for
every $i=1,2,\ldots, t$.  (Note that the \cite{Cherlin1999} proves a stronger
result:  The universal graph exists for any class with finite algebraic closure
and, moreover, that the existence of $\omega$-categorical universal graph is equivalent
to this condition.  This result was extended to relational structures in
\cite{Cherlin2001}. Our motivation comes from study of graph homomorphisms and thus
in this paper we are interested in this weaker version of their
result as presented by Theorem \ref{1thm}.)

The proof of Theorem \ref{1thm} given in \cite{Cherlin1999} is based on techniques of model theory and, quoting \cite{Cherlin1999}, no explicit universal object is constructed ``in any very explicite way''.  In this paper we give such a proof which is a streamlined version of proof given in \cite{Hubicka2013}. (\cite{Hubicka2013} contains a stronger result for regular, possibly infinite, families. Here we proceed in other direction proving the model companion and the optimality of expansion).

  Homomorphism universal structures lead to a very different area of Constraint Satisfaction Problems.
Clearly every universal object is also hom-universal.  This however does not hold conversely as shown by examples of classes all with of graphs bounded chromatic numbers. Another example is provided by the class of all planar graphs: $K_4$ is hom-universal for the class of planar graphs by virtue of the $4$-colour theorem while no universal graph exists \cite{Hajnal1984}. Of special interest are classes $\Forb(\relsys{F}_1, \relsys{F}_2, \ldots, \relsys{F}_t)$ which have finite hom-universal structure. Such hom-universal structures are called {\em duals} and in this context the classes with finite hom-universal objects were characterised \cite{Nesetril2000} as follows (for general finite relational structures)
\begin{thm}
 $\Forb(\relsys{F}_1, \relsys{F}_2, \ldots, \relsys{F}_t)$ has dual if and only if all structures $\relsys{F}_i$ are (relational) trees.
\end{thm}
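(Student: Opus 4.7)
The plan is to prove the two implications separately, leveraging the explicit universal-object machinery the paper has developed in the previous sections.

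For the forward implication (trees imply a finite dual), I would specialize the construction of the universal object for $\Forb(\relsys{F}_1,\dots,\relsys{F}_t)$ to the tree case. Recall that the paper's universal construction tracks local obstructions by attaching a lift (an auxiliary relation) of arity bounded by some function of the forbidden structures. The crucial observation is that when every $\relsys{F}_i$ is a relational tree, any partial homomorphism from $\relsys{F}_i$ that is extendable along a branch is in fact extendable, so the minimal witnesses of the forbidden condition are of bounded size and of a tree-shaped combinatorial type. This lets us realize the universal object as an up-to-isomorphism finite collection of lift types, closed under a ``parent compatibility'' relation. Quotienting by hom-equivalence on the unlifted structure then gives a finite $\relsys{D}$, and the universal property of the construction becomes precisely the duality: $\relsys{A}\to\relsys{D}$ iff no $\relsys{F}_i$ maps into $\relsys{A}$.

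For the reverse implication (a finite dual forces each $\relsys{F}_i$ to be a tree), I would argue by contrapositive using a sparse-incomparability argument of Erd\H{o}s--Ne\v{s}et\v{r}il--R\"odl type. Suppose some $\relsys{F}_i$ fails to be homomorphically equivalent to a tree; then (after replacing each $\relsys{F}_i$ by its core) $\relsys{F}_i$ contains a cycle in the incidence/Gaifman sense, so it has some finite girth $g_0$. Let $\relsys{D}$ be a purported finite dual, and apply sparse incomparability to produce a structure $\relsys{A}$ of girth strictly greater than $\max_i g_i$ such that $\relsys{A}\not\to\relsys{D}$. High girth forbids any non-tree $\relsys{F}_i$ from mapping into $\relsys{A}$, and the tree $\relsys{F}_j$'s that do not map into $\relsys{A}$ can be handled by choosing $\relsys{A}$ also to avoid them (e.g.\ by taking $\relsys{A}$ of large enough ``chromatic'' invariant relative to $\relsys{D}$). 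Thus $\relsys{A}\in\Forb(\relsys{F}_1,\dots,\relsys{F}_t)$, contradicting that $\relsys{D}$ is a dual.

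The main obstacle is the forward direction, and within it the step showing that the lift system truly collapses to a finite set of types when the forbidden structures are trees. This hinges on a uniqueness-of-extension property of trees (a partial homomorphism from a tree is determined by its values on any connected subset, and branches can be extended independently), which bounds both the arity and the number of lift types. A secondary technical point, which I expect to be largely bookkeeping, is verifying that the hom-equivalence quotient of the universal object is well-defined and still hom-universal; this should follow from general nonsense once finiteness is in hand.
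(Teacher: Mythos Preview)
Your overall plan matches what the paper does, with one caveat: the paper does not itself prove the full biconditional. Theorem~1.2 is quoted from \cite{NTardif}; the only direction reproved here is ``trees $\Rightarrow$ finite dual,'' via Corollary~\ref{nasdual}. So your reverse-direction sparse-incomparability argument cannot be compared to anything in this paper---it is simply the standard Ne\v{s}et\v{r}il--Tardif proof, and your sketch of it is fine (modulo the usual care in simultaneously avoiding the tree $\relsys{F}_j$'s while keeping $\relsys{A}\not\to\relsys{D}$, which the full sparse-incomparability lemma handles).

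For the forward direction your strategy is the paper's strategy, but your stated mechanism for why the lift collapses is off. You write that it ``hinges on a uniqueness-of-extension property of trees (a partial homomorphism from a tree is determined by its values on any connected subset).'' That claim is false as stated, and in any case it is not what drives the argument. The paper's point is structural and much simpler: in a relational tree every inclusion-minimal vertex cut has size~$1$, so every piece $\Piece_i$ has a single root, and therefore every added relation $\ext{X}{i}$ in the canonical lift $L(\relsys{A})$ is \emph{unary}. A monadic lift has only finitely many vertex types (subsets of $I'\setminus I$), and the generic $\relsys{U}'$ retracts onto the finite substructure containing one vertex of each realized type; its shadow is the dual $\relsys{D}_\F$. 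The ``parent compatibility'' and ``finitely many lift types'' you allude to are exactly this, but the reason is the $1$-vertex-cut property of trees (cf.\ Theorem~\ref{aritythm}), not any determination of homomorphisms by connected subsets. Once you replace your justification with that observation, your forward direction coincides with the paper's proof of Corollary~\ref{nasdual}.
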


In this paper we give a new construction of duals (Corollary \ref{nasdual}).

It is the aim of this paper to investigate both universal and hom-universal
objects in the context of a (seemingly much more restricted) class of generic
objects. Main result (Theorem \ref{univthm}) proves Theorem \ref{1thm} by means
of {\em shadows (reducts)} and {\em lifts (expansions)} of a generic (i.e.
ultrahomogeneous universal) structure. These classical model theoretic notions
will be, for the sake of completeness, briefly reviewed now (see e.g. \cite{Hodges1993}):

A {\em structure} $\relsys{A}$ is a pair $(A,(\rel{A}{i};i\in I))$ where $\rel{A}{i}\subseteq A^{\delta_i}$ (i.e. $\rel{A}{i}$ is a $\delta_i$-ary relation on $A$). The finite family $(\delta_i; i\in I)$ is called the {\em type} $\Delta$. The type is usually fixed and understood from the context. (Note that we consider relational structures only, and no function symbols.) If set $A$ is finite we call {\em $\relsys A$ finite structure}.   A {\em homomorphism} $f:\relsys{A}\to \relsys{B}=(B,(\rel{B}{i};i\in I))$ is a mapping $f:A\to B$ satisfying for every $(x_1,x_2,\ldots, x_{\delta_i})\in \rel{A}{i}\implies (f(x_1),f(x_2),\ldots,f(x_{\delta_i}))\in \rel{B}{i}$, $i\in I$. If $f$ is 1-1, then $f$ is called an {\em embedding}.
The class of all (countable) relational structures of type $\Delta$ will be denoted by $\Rel(\Delta)$. 

The class $\Rel(\Delta), \Delta=(\delta_i; i\in I)$, $I$ finite, is fixed throughout this paper. Unless otherwise stated all structures $\relsys{A}, \relsys{B},\ldots$ belong to $\Rel(\Delta)$.
Now let $\Delta'=(\delta'_i;i\in I')$ be a type containing type $\Delta$. (By this we mean $I\subseteq I'$ and $\delta'_i=\delta_i$ for $i\in I$.)
Then every structure $\relsys{X}\in \Rel(\Delta')$ may be viewed as structure $\relsys{A}=(A,(\rel{A}{i}; i\in I))\in \Rel(\Delta)$ together with some additional relations $\rel{X}{i}$ for $i\in I'\setminus I$. To make this more explicite these additional relations will be denoted by $\ext{X}{i}, i\in I'\setminus I$. Thus a structure $\relsys{X}\in \Rel(\Delta')$ will be written as:
$$\relsys{X}=(A,(\rel{A}{i};i\in I),(\ext{X}{i};i\in I'\setminus I))$$
and, by abuse of notation, briefly as:
$$\relsys{X}=(\relsys{A},\ext{X}{1},\ext{X}{2},\ldots, \ext{X}{N}).$$

 We call $\relsys{X}$ a {\em lift} of $\relsys{A}$ and $\relsys{A}$ is called the {\em shadow} of $\relsys{X}$. In this sense the class $\Rel(\Delta')$ is the class of all lifts of $\Rel(\Delta)$.
Conversely, $\Rel(\Delta)$ is the class of all shadows of $\Rel(\Delta')$.
 Note that a lift is also in the model theoretic setting called an {\em expansion} and a shadow is often called a {\em reduct}.
(Our terminology is motivated by a computer science context, see \cite{Kun2008}.)
We will use letters $\relsys{A}, \relsys{B}, \relsys{C}, \ldots$ for shadows (in $\Rel(\Delta)$) and letters $\relsys{X}, \relsys{Y}, \relsys{Z}$ for lifts (in $\Rel(\Delta'))$.

For lift $\relsys X=(\relsys{A}, \ext{X}{1},\ldots,\ext{X}{N})$ we denote by $\sh(\relsys{X})$ the relational
structure $\relsys{A}$ i.e. its shadow. ($\sh$
is called the {\em forgetful functor}.) Similarly, for a class $\K'$
of lifted objects we denote by $\sh(\K')$ the class of all shadows
of structures in $\K'$.
On the other hand for a class $\K$ of structures we denote by $\K'$ the class of lifted structures.

As it is well known, an universal object may be constructed by an iterated amalgamation of finite objects and this leads to a stronger notion of {\em $\K$-generic object}:
For a class $\K$ we say that an object $\relsys{U}$ is $\K$-generic if it is both $\K$-(embedding) universal and it is {\em ultrahomogeneous}.
The later notion means the following:
Every isomorphism $\varphi$ between two finite substructures $\relsys{B}$ and $\relsys{C}$ of $\relsys{U}$ may be extended to an automorphism of $\relsys{U}$.
The notion of ultrahomogeneous structure is one of the key notions of modern model theory and it is the source of the well known classification programme \cite{Lachlan1984}.

The ultrahomogeneous structures are characterised by the properties of finite substructures.
Denote by $\Age(\relsys{U})$ the class of all finite substructures of $\relsys{U}$.
For a class $\K$ of countable relational structures, we denote by $\Age(\K)$ the class of all finite structures isomorphic to a substructure of some $\relsys{A}\in\K$.
 By a following classical result \cite{Fraisse1986, Hodges1993} structure $\relsys{U}$ is ultrahomogeneous if and only if $\Age(\relsys{U})=\K$ is a countable class with the following three properties:

\begin{enumerate}
\item[(a)] (Hereditary property) For every $\relsys{A}\in \K$ and an induced substructure $\relsys{B}$ of $\relsys{A}$ we have $\relsys{B}\in \K$;
\item[(b)] (Joint embedding property) For every $\relsys{A}, \relsys{B}\in \K$ there exists $\relsys{C}\in \K$ such that both $\relsys{C}$ contains $\relsys{A}$ and $\relsys{B}$ as induced substructures;
\item[(c)] (Amalgamation property) For $\relsys{A},\relsys{B},\relsys{C}\in \K$ and $\varphi$ embedding of $\relsys{C}$ into $\relsys{A}$, $\varphi'$ embedding of $\relsys{C}$ into $\relsys{B}$, there exists $\relsys{D}\in \K$ and embeddings $\psi:\relsys{A}\to \relsys{D}$ and $\psi':\relsys{B}\to\relsys{D}$ such that $\psi\circ\varphi = \psi'\circ\varphi'$. See Figure \ref{amalgamfig}.
\begin{figure}
\centerline{\includegraphics{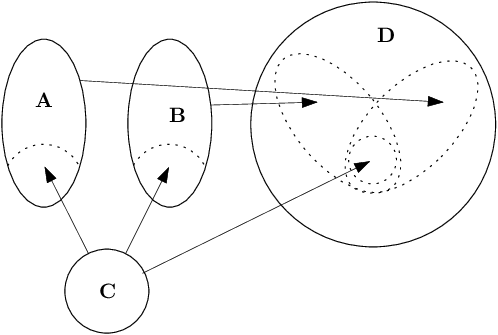}}
\caption{Amalgam of $\relsys{A}$ and $\relsys{B}$ over $\relsys{C}$.}
\label{amalgamfig}
\end{figure}
\end{enumerate}

Any class $\K$ satisfying the three conditions $(a),(b),(c)$ is called an {\em amalgamation class}.
In this case we denote by $\lim \K$ the (up to isomorphism uniquely determined) generic object $\relsys{U}$. $\lim \K=\relsys{U}$ is called the {\em \Fraisse{} limit} of $\K$.

In this paper we show that universal object for classes $\Forb(\F)$ may be constructed as a shadow (reduct) of a generic object of an explicitly defined lifted class. 
The following is the principal result of this paper:

\begin{thm}
\label{univthm}

For every finite family $\F$ of finite connected relational structures of type $\Delta$ there exists a type $\Delta'$ containing $\Delta$ and a finite family $\F'$ of finite structures of type $\Delta'$ such that:
\begin{enumerate}
\item $\Age(\Forbi(\F'))$ is an amalgamation class whose shadow is $\Age(\Forb(\F))$,
\item the shadow $\relsys{U}$ of the generic $\relsys{U}'=\lim \Forbi(\F')$ is universal structure for $\Forb(\F)$,
\item $\relsys{U}$ is model complete, and
\item the automorphism group of $\relsys{U}$ is the same as the automorphism group of $\relsys{U}'$. 
\end{enumerate}

\end{thm}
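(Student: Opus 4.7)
My plan is to construct an explicit lifted signature $\Delta'$ and a finite family $\F'$ of connected ``witness'' structures, so that $\Forbi(\F')$ becomes a \emph{free} amalgamation class and the shadow operator $\psi$ sends $\Forbi(\F')$ onto exactly $\Forb(\F)$. Once this is achieved, universality of $\relsys{U}=\psi(\relsys{U}')$ is automatic: given $\relsys{A}\in\Forb(\F)$, lift $\relsys{A}$ to some $\relsys{X}\in\Forbi(\F')$, embed $\relsys{X}\hookrightarrow \relsys{U}'=\lim\Forbi(\F')$ using genericity, and project to obtain $\relsys{A}\hookrightarrow \psi(\relsys{U}')=\relsys{U}$.

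For the construction I would add to $\Delta$ relations of arity bounded by $\max_i |V(\relsys{F}_i)|$, designed so that any $\relsys{A}\in\Forb(\F)$ can carry a partial labelling of its elements (or tuples) by vertices of the various $\relsys{F}_i$, compatible with the relations of $\relsys{A}$. I would then let $\F'$ consist of the minimal lifts in which the added predicates, together with the shadow relations, encode a complete homomorphism from some $\relsys{F}_i$ into the shadow. The critical design requirement is that each member of $\F'$ be \emph{irreducible}: the added predicates tie all of its points into a single connected block. Connectedness of each $\relsys{F}_i$ is precisely what should force this.

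The amalgamation step for $\Forbi(\F')$ then uses the classical fact that, if the forbidden family consists of irreducible structures, the \emph{free amalgam} of two members over a common substructure (disjoint union identifying the common part, no new relational tuples added) lies in $\Forbi(\F')$: any embedded $\relsys{F}'\in\F'$ would, by irreducibility, be confined entirely to one side, contradicting its absence there. Heredity is immediate and the joint embedding property is the special case of amalgamation over the empty substructure, so $\Forbi(\F')$ is an amalgamation class and admits a \Fraisse{} limit. To identify the shadow as $\Forb(\F)$ I would verify (a) each $\relsys{A}\in\Forb(\F)$ admits some $\F'$-free lift, the absence of any homomorphism $\relsys{F}_i\to\relsys{A}$ being exactly what makes the lift consistent, and (b) any lift in $\Forbi(\F')$ projects into $\Forb(\F)$, since a homomorphism $\relsys{F}_i\to\psi(\relsys{X})$ would induce on the lifted side a copy of some member of $\F'$.

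The principal obstacle is calibrating $\Delta'$ and $\F'$ finely enough to meet three competing demands at once: richness, so that every member of $\Forb(\F)$ lifts; tightness, so that no lift projects outside $\Forb(\F)$; and irreducibility of all members of $\F'$, needed for free amalgamation. The connectedness hypothesis on the $\relsys{F}_i$ is the combinatorial input that should make these compatible, by ensuring that a would-be homomorphism of $\relsys{F}_i$ manifests as a single connected lifted block rather than as a configuration that could be split across an amalgamation seam.
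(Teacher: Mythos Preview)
Your proposal rests on getting \emph{free} amalgamation for $\Forbi(\F')$ by making every member of $\F'$ irreducible, and you assert that ``connectedness of each $\relsys{F}_i$ is precisely what should force this''. That step fails. Connectedness of the Gaifman graph is strictly weaker than irreducibility: $C_5$ is connected but is the free amalgam of the induced paths on $\{1,2,3,4\}$ and $\{1,4,5\}$ over $\{1,4\}$. More to the point, your condition (b) requires that \emph{every} lift $\relsys{X}$ whose shadow admits a homomorphism from some $\relsys{F}_i$ must contain an embedded member of $\F'$. Since the extra relations on $\relsys{X}$ over the image of $\relsys{F}_i$ are arbitrary, $\F'$ must in effect contain all possible lifts of each homomorphic image of each $\relsys{F}_i$; among these are lifts in which the new relations do \emph{not} tie the vertices into a single block (for instance the trivial lift, which is as reducible as $\relsys{F}_i$ itself). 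So irreducibility of all of $\F'$ is incompatible with the tightness you need, and the free-amalgam step ``any embedded $\relsys{F}'$ is confined to one side'' breaks down. Concretely, if lifts of the length-$3$ path $p_1p_2p_3p_4$ and of the length-$2$ path $p_1qp_4$ ever agree on $\{p_1,p_4\}$, their free amalgam is a lift with a $C_5$ in its shadow.

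The paper's argument is genuinely different and avoids this trap by \emph{not} using free amalgamation in the lifted class. It introduces, for each minimal cut $R$ of each $\relsys{F}\in\F$, the rooted \emph{pieces} $(\relsys{P},\overrightarrow{R})$, adds one relation per piece, and defines the canonical lift $L(\relsys{A})$ by recording, on every tuple, exactly which pieces map there with those roots. The class $\Lifts$ is the hereditary closure of such $L(\relsys{A})$ for $\relsys{A}\in\Forb(\F)$. The amalgam of $\relsys{X},\relsys{Y}$ over $\relsys{Z}$ is then taken to be $L(\relsys{D})$, where $\relsys{D}$ is the free amalgam of \emph{witnesses} $W(\relsys{X}),W(\relsys{Y})$; this is in general \emph{not} the free amalgam of $\relsys{X}$ and $\relsys{Y}$ (new lifted tuples may appear across the seam), and a minimal-counterexample argument on piece sizes shows that no new tuples land inside $X$ or inside $Y$. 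Only afterwards is $\Lifts$ rewritten as $\Forbi(\F')$ via covering sublifts and rooted homomorphisms. If you want to rescue your outline you would need either to exhibit a signature that forces every forbidden configuration to be irreducible (and then redo the shadow computation), or to abandon free amalgamation and construct the amalgam explicitly as the paper does.
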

Among others this implies that $\relsys{U}'$ is ultrahomogeneous and $\relsys{U}$ is $\omega$-categorical (of course $\omega$-categoricity for universal objects for classes $\Forb(\F)$ was proved in \cite{Cherlin1999} as well.)

We prove Theorem \ref{univthm} in two steps: First we define class $\Lifts$ of lifted objects and then prove that $\Lifts$ is an amalgamation class (Theorem \ref{mainthm}).
Thus we can get universal objects for classes $\Forb(\F)$ in a particularly efficient way as shadow of a \Fraisse{} limit of certain (explicitly defined) lifted class $\Forbi(\F')$.   

Let us remark that the assumption that $\F$ is finite is not necessary for Theorem~\ref{univthm}. In \cite{Hubicka2013} we proved (by a different proof) an extension of Theorem~\ref{univthm} to some infinite (called regular) families $\F$. However in Section \ref{bounded} we use the finiteness of $\F$ to obtain lower bounds on arity of the lifts.

Theorem \ref{univthm} and its combinatorial proof have several other consequences. For example, we show that in the case that all $\relsys{F}\in \F$ are relational trees it suffices to consider only a class of monadic lifts (i.e. all relations $\ext{A}{i}$ for $i\in I'\setminus I$ are monadic; this case corresponds to structures endowed with a colouring of its vertices).  Consequently, in this case the generic structure $\relsys{U}'$ has not only the universal shadow $\relsys{U}$ but in fact this shadow $\relsys{U}$ has a finite core $\relsys{D}$ which is thus a finite hom-universal structure and thus a dual. This is stated as Corollary \ref{nasdual}. In this way we reprove the finite duality theorem \cite{Nesetril2000}.  This is particularly pleasing: (in this sense) duals are generic objects and this indicates yet another context of hom-dualities.

In Section \ref{urysohnsection} we include further corollaries of our method to metric spaces (which can be treated analogously to forbidding cycles). In Section \ref{bounded} we solve a related extremal problem: we show that our method lead to the optimal arities of added lifts. This is based on a non-trivial Ramsey-type result (Lemma \ref{sets}).
The results of Section~\ref{bounded} may be formulated by means of relational complexity \cite{Hartman2014}.
This we shall elaborate in a separate paper with David Hartman.

\section{Preliminaries}

Let us add a few more definitions. For a structure $\relsys{A}=(A,\rel{A}{i},i\in I)$ the {\em Gaifman graph}
(in combinatorics often called {\em 2-section}) is the graph $\relsys{G}_\relsys{A}$ with vertices
$A$ and all those edges which are a subset of a tuple of a relation of
$\relsys{A}$: $$\relsys{G}_\relsys{A}=(V,E)$$ where $\{x,y\}\in E$ if and only if $x\neq y$ and there exists
tuple $\vec{v}\in \rel{A}{i},i\in I$ such that $x,y\in \vec{v}$.

A {\em g-cut} in $\relsys{A}$ is a subset $C$ of $A$ such that the Gaifman
graph $G_\relsys{A}$ is disconnected by removing set $C$. That is, there are
vertices $u,v\in A\setminus C$ that belong to the same connected component of
$G_\relsys{A}$ but to different connected components of
$G_\relsys{A}\setminus C$.
A {\em cut} in $\relsys{A}$ is subset $C$ of $A$
such that there are vertices $u,v\in A\setminus C$ that belong to the same
connected component of $\relsys{A}$ but to different connected components of $\relsys{A}\setminus C$.
(Observe that not every cut is a g-cut. With relations of arity greater than 2,
$G_{\relsys{A}\setminus C}$ may be different from $G_{\relsys{A}}\setminus C$.)

For g-cut $C$ in relational structure $\relsys{A}$  {\em g-component of
$\relsys{A}$ with g-cut $C$} is a structure $\relsys{A}_1$ induced by
$\relsys{A}$ on some connected component of $G_\relsys{A}\setminus C$.

We will make use of the following simple observation about the neighbourhood and
g-components.
\begin{observation}
\label{cuty}
Let $\relsys{A}_1$ be a g-component of $\relsys{A}$ with g-cut $C$. Then the neighbourhood $N_\relsys{A}(A_1)$ is a subset of $C$.
Moreover $N_\relsys{A}(A_1)$ is a g-cut and $\relsys{A}_1$ is one of the g-components of $\relsys{A}$ with g-cut $N_\relsys{A}(A_1)$.
\end{observation}

Given a structure $\relsys{A}$ with g-cut $C$ and two (induced) substructures
$\relsys{A}_1$ and $\relsys{A}_2$, we say that $C$ {\em g-separates}
$\relsys{A}_1$ and $\relsys{A}_2$ if there are g-components $\relsys{A}'_1\neq
\relsys{A}'_2$ of $\relsys{A}$ with g-cut $C$ such that $A_1\subseteq A'_1$ 
and $A_2\subseteq A'_2$.

\begin{defn}
\label{def:separating}
Let $C$ be a g-cut in structure $\relsys{A}$. Let $\relsys{A}_1\neq \relsys{A}_2$
be two g-components of $\relsys{A}$ with g-cut $C$.  We call $C$ {\em
minimal g-separating g-cut} for $\relsys{A}_1$ and $\relsys{A}_2$ in $\relsys{A}$ if
$C=N_\relsys{A}(A_1)=N_\relsys{A}(A_2)$.
\end{defn}
For brevity, we can omit one or both $g$-components when speaking about
a minimal g-separating g-cut. Explicitely,
we call g-cut $C$ {\em minimal g-separating} for $\relsys{A}_1$
in $\relsys{A}$ if there exists another structure $\relsys{B}$ such that $C$
is minimal g-separating for $\relsys{A}_1$ and $\relsys{B}$ in $\relsys{A}$.  A g-cut
$C$ is {\em minimal g-separating} in $\relsys{A}$ if there exists structures
$\relsys{B}_1$ and $\relsys{B}_2$ such that $C$ is minimal g-separating for
$\relsys{B}_1$ and $\relsys{B}_2$ in $\relsys{A}$.

The name of minimal g-separating g-cut is justified by the following (probably folkloristic) proposition.
\begin{prop}
Let $\relsys{A}$ be a connected relational structure, $C$ a g-cut in $\relsys{A}$ and
$\relsys{A}_1$ and $\relsys{A}_2$ (induced) connected substructures of $\relsys{A}$
g-separated by $C$. Then there exists a minimal g-separating g-cut $C'\subseteq C$ that
g-separates $\relsys{A}_1$ and $\relsys{A}_2$ in $\relsys{A}$.
Moreover if $N_\relsys{A}(A_1)\subseteq C$ (or, equivalently, $\relsys{A}_1$ is a g-component of $\relsys{A}$ with g-cut $C$), then $C'\subseteq N_\relsys{A}(A_1)$.
\label{prop:sep}
\end{prop}
\begin{proof}
\begin{figure}
\centerline{\includegraphics{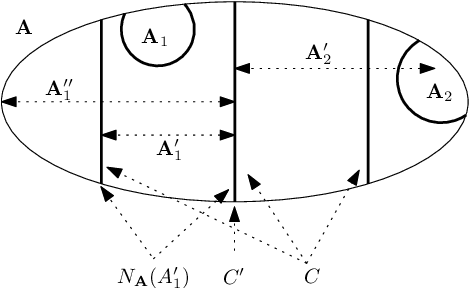}}
\caption{Construction of a minimal g-separating g-cut $C'$ separating $\relsys{A}_1$ and $\relsys{A}_2$ in $\relsys{A}$.}
\label{constructioncut}
\end{figure}
We will construct a series of g-cuts and g-components as depicted in Figure~\ref{constructioncut}.

Denote by $\relsys{A}'_1$ the g-component of $\relsys{A}$ with g-cut $C$
containing $\relsys{A}_1$ (and thus not containing $\relsys{A}_2$).  By Observation~\ref{cuty},
$N_\relsys{A}(A'_1)\subseteq C$ is g-cut that g-separates $\relsys{A}'_1$ and
$\relsys{A}_2$ (because $\relsys{A}'_1$ is also g-component of $\relsys{A}$ with g-cut
$N_\relsys{A}(A'_1)$ and $\relsys{A}'_1$ do not contain $\relsys{A}_2$).  

Now
consider g-component $\relsys{A}'_2$ of $\relsys{A}$ with g-cut
$N_\relsys{A}(A'_1)$ containing $\relsys{A}_2$.  Put $C'=N_\relsys{A}(A'_2)$. By
Observation~\ref{cuty}, $C'\subseteq N_\relsys{A}(A'_1)\subseteq C$ is g-cut and
$\relsys{A}'_2$ (not containing $\relsys{A}_1$) is one of its g-components.

Denote by $\relsys{A}''_1$ the g-component of $\relsys{A}$ with g-cut $C'$
containing $\relsys{A}_1$. It follows that $C'$ g-separates $\relsys{A}''_1$ (that contains $\relsys{A}_1$) and $\relsys{A}'_2$ (that contains $\relsys{A}_2$).

To see that $C'$ is minimal g-separating for $\relsys{A}''_1$ 
and $\relsys{A}'_2$ it remains
to show that every vertex in $C'=N_\relsys{A}(A'_2)$ is also in $N_\relsys{A}(A''_1)$.  This is true
because every vertex of $C'$ is in $N_\relsys{A}(A'_1)$ and $\relsys{A}'_1$ is
substructure of $\relsys{A}''_1$. 

\end{proof}
Observe that every inclusion minimal g-cut is also minimal g-separating, but
not vice versa. Every minimal g-separating g-cut $C'\subset C$ that g-separates
$\relsys{A}_1$ and $\relsys{A}_2$ is however also inclusion minimal g-cut that
separates $\relsys{A}_1$ and $\relsys{A}_2$.  

\section {Basic construction ($\Forb(\relsys{\F})$ classes)}
\label{sec:basis}

If $C$ is a set of vertices then $\overrightarrow{C}$ will denote a tuple (of
length $|C|$) formed by all the elements of $C$. Alternatively, $\overrightarrow{C}$
is an arbitrary linear ordering of $C$.
A {\em rooted structure} $\Piece$ is a pair $(\relsys{P},\overrightarrow{R})$
where $\relsys{P}$ is a relational structure and $\overrightarrow{R}$ is an
tuple consisting of distinct vertices of $\relsys{P}$. $\overrightarrow{R}$ is
called the {\em root} of $\Piece$ and the size of $\overrightarrow{R}$ is the
{\em width} of $\Piece$.  We say that rooted structures
$\Piece_1=(\relsys{P}_1,\overrightarrow{R}_1)$ and
$\Piece_2=(\relsys{P}_2,\overrightarrow{R}_2)$ are {\em isomorphic} if there is
a function $f:P_1\to P_2$ that is an isomorphism of structures $\relsys{P}_1$
and $\relsys{P}_2$ and $f$ restricted to $\overrightarrow{R}_1$ is a monotone
bijection between $\overrightarrow{R}_1$ and $\overrightarrow{R}_2$ (we denote
this $f(\overrightarrow{R}_1)=\overrightarrow{R}_2$).

The following is our basic notion which
resembles decomposition techniques standard in graph theory and thus we adopted similar terminology.

\begin{defn}
Let $\relsys{A}$ be a connected relational structure and $R$ a minimal g-separating
g-cut for g-component $\relsys{C}$ in $\relsys{A}$. A {\em piece} of a relational structure
$\relsys A$ is then a rooted structure $\Piece=(\relsys{P},\overrightarrow{R})$, where the
tuple $\overrightarrow{R}$ consists of the vertices of the g-cut $R$ in a (fixed)
linear order and $\relsys {P}$ is a structure induced by $\relsys{A}$ on $C\cup R$.
\end{defn}

Note that since $\relsys{P}$ is the union of a g-component and its neighbourhood it follows that
the pieces of a connected structure are always connected structures.

All pieces are considered as rooted structures: a piece $\Piece$ is a structure $\relsys{P}$ rooted at $\overrightarrow{R}$. Accordingly, we say
 that pieces $\Piece_1=(\relsys{P}_1,\overrightarrow{R}_1)$ and
$\Piece_2=(\relsys{P}_2,\overrightarrow{R}_2)$ are {\em isomorphic} if there is function $\varphi:P_1\to P_2$ that is isomorphism of structures $\relsys{P}_1$ and $\relsys{P}_2$ and $\varphi$ restricted to $\overrightarrow{R}_1$ is the monotone bijection between $\overrightarrow{R}_1$ and $\overrightarrow{R}_2$ (we denote this
$\varphi(\overrightarrow{R}_1)=\overrightarrow{R}_2$).

\begin{figure}
\centerline{\includegraphics{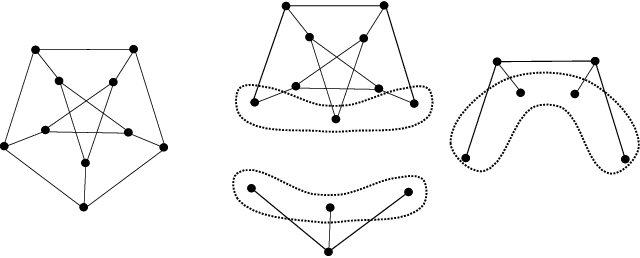}}
\caption{Pieces of Petersen graph up to isomorphisms (and a permutations of roots). }
\label{Petersoni}
\end{figure}
Observe that for relational trees, pieces are equivalent to rooted  branches. Figure \ref{Petersoni} shows all pieces of the Petersen graph.

\begin{lem}
\label{lem:ama}
Let $\Piece_1=(\relsys{P}_1,\overrightarrow{R}_1)$ be a piece of structure $\relsys A$ and $\Piece_2=(\relsys{P}_2,\overrightarrow{R}_2)$ a piece of $\relsys{P}_1$.  If $R_1\cap P_2\subseteq R_2$, then $\Piece_2$ is also a  piece of $\relsys A$.
\label{kousky}
\end{lem}
\begin{proof}
Denote by $\relsys{C}_1$ connected component of $\relsys{A}\setminus R_1$ that produces $\Piece_1$ (i.e. $C_1\cup R_1=P_1$).  Denote by $\relsys{C}_2$ component of $\relsys{P}_1\setminus R_2$ that produces $\Piece_2$ (i.e. $C_2\cup R_2=P_2$). 
As $R_1\cap P_2\subseteq R_2$ one can check that then $\relsys{C}_2$ is contained in $\relsys{C}_1$ and every vertex of $\relsys{A}$ connected by tuple to any vertex of $\relsys{C}_2$ is
contained in $\relsys{P}_1$. Thus $\relsys{C}_2$ is also connected component of
$\relsys{A}$ created after removing vertices of $R_2$.
\end{proof}

Let $\F$ be a fixed finite set of connected connected finite relational structures of (finite) type
$\Delta$. For construction of universal structure of $\Forb(\F)$ we use special
lifts, called $\F$-lifts.  

Towards this end let $\Piece_i=(\relsys{P}_i,\overrightarrow{R}_i), i\in I'$ be an enumeration of all pieces of all relational.
structures $\relsys{F}\in \F$. Notice that there are only finitely many pieces. This enumeration of pieces is fixed throughout this section.

Relational structure $\relsys X=(\relsys{A}, (\ext{X}{i},i\in I'))$ is
called {\em $\F$-lift} of relational structure $\relsys A$ when the arities of
relations $\ext{X}{i},i\in I'$, correspond to
$|\overrightarrow{R}_i|$ (i.e. the width of $\Piece_i$).

For relational structure $\relsys A$ we define {\em canonical lift $\relsys{X}=L({\relsys A})$}
by putting $(v_1,v_2,\ldots,v_l)\in \ext{X}{i}$ if and only if there is homomorphism
$\varphi$ from $\relsys{P}_i$ to $A$ such that $\varphi(\overrightarrow
{R}_i)=(v_1,v_2,\ldots,v_l)$.

We will use the following notion of complete lifts:
\begin{defn}
Canonical lift $L({\relsys A})$ is {\em complete} on $B\subseteq A$ if for every  $\relsys{C}\in \Forb(\F)$
such that $\relsys{C}$ contains $\relsys{A}$ as induced substructure, the lift induced on $B$ by $L(\relsys{A})$ is
the same as the lift induced by $L(\relsys{C})$.
We say that lift $\relsys{X}$ is {\em complete} if there exists $\relsys{A}\in
\Forb(\F)$ such that $\relsys{X}$ is induced on $X$ by $L(\relsys{A})$ and the
canonical lift $L(\relsys{A})$ is complete on $X$.
\end{defn}

Denote by $\Lifts$ the class of all complete lifts.

For $\relsys{X}\in \Lifts$ we denote by $W(\relsys{X})$ a structure $\relsys{A}\in\Forb(\F)$ such that structure $\relsys{X}$ is induced on $X$ by $L(\relsys{A})$ and $L(\relsys{A})$ is complete on $X$. $W(\relsys{X})$ is called a {\em witness} of the fact that $\relsys{X}$ belongs to $\Lifts$.

We shall remark that the definition of $\Lifts$ and witness is different from
the definition used by \cite{Hubicka2013} --- here we add the notion of completeness.
This simplifies proof of the amalgamation property.
In fact the completeness imply that the resulting universal
structure is existentially complete (while ones constructed in \cite{Hubicka2013}
are not). Moreover when we speak about finite families only, the techniques can be easily
extended to regular families of forbidden substructures in the sense if \cite{Hubicka2013}.

The key technical part of our construction is expressed by the following:
\begin{lem}
Let $\relsys{A}$ and $\relsys{B}$ be both witnesses of $\relsys{X}$. Then the free amalgam of $\relsys{A}$ and $\relsys{B}$ over $X$ is also a witness.
\end{lem}
\begin{proof}
Denote by $\relsys{D}$ the free amalgam of $\relsys{A}$ and $\relsys{B}$ over $X$.
From the completeness of $\relsys{X}$ in both $\relsys{A}$ and $\relsys{B}$ we know that $\relsys{D}$ is a witness of $\relsys{X}$ if
$\relsys{D}\in \Forb(\F)$. Assume, to the contrary, that
there is an $\relsys{F}\in \F$ and a homomorphism $f$ from $\relsys{F}$ to $\relsys{D}$.
Homomorphism $f$ partitions the vertex set of $\relsys{F}$ into three sets defined as follows: $F_X$
are vertices with image in $X$, $F_A$ are vertices with image in
$A\setminus X$ and $F_B$ are vertices with image in $B\setminus X$.
Without loss of generality we can assume that $\relsys{F}$ and $f$ was chosen in
a way so $|F_A|$ is minimal.

Observe that $F_X$ is a cut of $\relsys{F}$ separating $F_A$ and $F_B$. 
 Denote by $\Piece_i$ a piece with root contained in $F_X$ containing a vertex of $F_A$ (such piece can be obtained by Proposition~\ref{prop:sep}).
If $f(\overrightarrow{R}_i)\in \ext{X}{i}$ (and thus also
$f(\overrightarrow{R}_i)\in \ext{L(A)}{i}$ and $f(\overrightarrow{R}_i)\in
\ext{L(B)}{i}$) then there exists a homomorphism $f':\Piece_i\to \relsys{B}$
such that $f'(\overrightarrow{R}_i)= f(\overrightarrow{R}_i)$. Consider function
$f'':F\to D$ defined as follows:
\begin{enumerate}
\item $f''(x)=f'(x)$ for $x\in P_i$,
\item $f''(x)=f(x)$ otherwise.
\end{enumerate}
$f''$ is a homomorphism $\relsys{F}\to \relsys{D}$ that uses fewer vertices of $F_A$ and possibly more vertices of $F_X\cup F_B$.  We call this an {\em flip operation}. When piece has root in $F_X$, flip operation moves the image of a piece from one part of the amalgam to the other.

By minimality of $F_A$ we thus know that $f(\overrightarrow{R}_j)\notin \rel{X}{j}$.
If $P_j\subseteq A\cup X$, then by the definition of canonical lift we have $f(\overrightarrow{R}_j)\in
\rel{L(A)}{j}$ a contradiction.
We thus conclude that every piece with root in $F_X$ containing a vertex of
$F_A$ must also contain a vertex of $F_B$.

Choose $\Piece_j$ to be piece containing both vertices of $F_A$ and $F_B$ with minimal number of non-root vertices among pieces with this property.
If $\Piece_j$ contains a sub-pieces with root in $F_X$ contained in $F_X\cup F_B$, we can perform the flip operation, this time replacing
vertices with images in $F_B$ by vertices with image in $F_X\cup F_A$.
If this procedure eliminates all vertices of $P_j\cap F_B$ we get
a homomorphism $f':\relsys{P}_j\to \relsys{A}$, $f'(\overrightarrow{R}_j)=f(\overrightarrow{R}_j)$, and therefore $f(\overrightarrow{R}_j)\in \rel{L(A)}{j}$ that contradicts
minimality of $|F_A|$.

Denote by $\relsys{A}_{j}$ a g-component of $\relsys{F}$ with g-cut $F_X$ contained in $\Piece_{j}$ consisting of vertices of $F_A$ and by $\relsys{B}_{j}$ a g-component of $\relsys{F}$ with g-cut $F_X$ contained in $\Piece_{j}$ consisting of vertices of $F_B$ that can not be eliminated from $\Piece_{j}$ by the flip operations.
Denote by $F'$ a set vertices of any connected component of $\relsys{F}\setminus P_j$ such that $R_j\subseteq N(F')$ (such component exists because $R_j$ is a minimal separating cut).
By application of Proposition~\ref{prop:sep} on g-cut $F_X\cap P_{j}$ with $F'$
and $B_j$, one gets that $R_{j}\subseteq N(B_{j})$. Otherwise one would obtain
a sub-piece that would contradict minimality of $\Piece_{j}$ or an assumption that $\relsys{B}_{j}$ can not be eliminated (and
thus it is not a contained in a piece consisting only of vertices $F_X\cup F_B$). The symmetric argument
gives $R_{j}\subseteq N(A_{j})$. Now by application of Proposition~\ref{prop:sep} with cut $F_X$ and
components $\relsys{A}_{j}$ and $\relsys{B}_{j}$ we obtain minimal separating g-cut $C$. Clearly $R_j\subseteq C$ because $R_j\subseteq N(A_j)\cap N(B_j)$. $C$ must contain some additional vertices of $F_X\cup (P_j\setminus R_j)$ because $\relsys{P}_j\setminus R_j$ is connected and $F_X$ separates $A_j$ and $B_j$. The pieces obtained are thus a proper sub-pieces
of $\Piece_j$ that either contain both vertices of $F_A$ and $F_B$ or they can be used for the flip operations. 
In all these cases this yields a contradiction.
\end{proof}
Now we can prove Theorem~\ref{univthm} formulated more precisely as:
\begin{thm}
\label{mainthm}
Let $\F$ be a finite set of finite connected relational structures, then the class
$\Lifts$ is an amalgamation class.
Consequently, there is a generic structure $\relsys{U}$ in $\Lifts$ and its shadow $\sh(\relsys{U})$ is a universal structure for class $\Forb(\F)$.
Moreover $\sh(\relsys{U})$ is model complete and $\Aut(\relsys{U})=\Aut(\sh(\relsys{U})))$.
\label{mainth}
\end{thm}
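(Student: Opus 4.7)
The plan is to verify the three axioms of an amalgamation class for $\Lifts$ and then invoke \Fraisse{}'s theorem. Heredity is built into the definition of $\Lifts$ (an induced substructure of an induced substructure of $L(\relsys{A})$ is itself such), and joint embedding reduces to amalgamation over the empty structure, or to a direct disjoint-union witness in $\Forb(\F)$. The substantive content is the amalgamation property.

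Given embeddings $\relsys{Z}\hookrightarrow\relsys{X}$ and $\relsys{Z}\hookrightarrow\relsys{Y}$ in $\Lifts$, identify the two copies of $\relsys{Z}$, let $D=X\cup_Z Y$, and form the free shadow amalgam $\relsys{W}$ of the witnesses $W(\relsys{X})$ and $W(\relsys{Y})$ over $\psi(\relsys{Z})$ (no shadow relation of $\relsys{W}$ holds on a tuple that mixes $W(\relsys{X})\setminus\psi(\relsys{Z})$ with $W(\relsys{Y})\setminus\psi(\relsys{Z})$). Define $\relsys{D}$ as the substructure of $L(\relsys{W})$ induced on $D$; then $\relsys{D}\in\Lifts$ tautologically with witness $\relsys{W}$, provided (i) $\relsys{W}\in\Forb(\F)$, and (ii) $\relsys{X}$ and $\relsys{Y}$ embed into $\relsys{D}$ (equivalently, the lift relations of $\relsys{X}$, resp.\ $\relsys{Y}$, agree with those of $L(\relsys{W})$ restricted to $X$, resp.\ $Y$).

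Both (i) and (ii) rest on one rerouting principle. For (i), suppose $\varphi:\relsys{F}\to\relsys{W}$ with $\relsys{F}\in\F$; connectedness forces some minimal cut $R$ of $\relsys{F}$ into $\varphi^{-1}(\psi(\relsys{Z}))$, and every resulting piece $\Piece_k$ of $\relsys{F}$ is sent entirely into $W(\relsys{X})$ or $W(\relsys{Y})$, with $\vec v:=\varphi(\overrightarrow{R}_k)\subseteq\psi(\relsys{Z})$. The four-step chain is: if $\Piece_k$ lands in $W(\relsys{Y})$, then $\vec v\in\extl{L(W(\relsys{Y}))}{k}$, hence $\vec v\in\ext{Y}{k}$ (inducedness of $\relsys{Y}$), hence $\vec v\in\ext{Z}{k}\subseteq\ext{X}{k}$ (agreement on $\relsys{Z}$), hence $\vec v\in\extl{L(W(\relsys{X}))}{k}$, producing a homomorphism $\Piece_k\to W(\relsys{X})$ with root $\vec v$. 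Rerouting every $W(\relsys{Y})$-piece in this way and using that pieces cover $\relsys{F}$ yields $\relsys{F}\to W(\relsys{X})$, contradicting $W(\relsys{X})\in\Forb(\F)$. For (ii), the same argument is applied one level down on $\Piece_i$ itself: given $\varphi:\Piece_i\to\relsys{W}$ with root in $X$, any component $K$ of $\Piece_i\setminus\varphi^{-1}(\psi(\relsys{Z}))$ mapped into $W(\relsys{Y})\setminus\psi(\relsys{Z})$ is disjoint from $R_i$ (whose image sits in $W(\relsys{X})$), so $R_i\cap(K\cup R_K)\subseteq R_K$ and Lemma~\ref{kousky} certifies $(K\cup R_K,\overrightarrow{R}_K)$ as an enumerated piece of the ambient $\relsys{F}\in\F$. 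The same four-step chain reroutes it into $W(\relsys{X})$, and the replacements glue consistently along $\psi(\relsys{Z})$.

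With amalgamation established, $\Lifts$ has a \Fraisse{} limit $\relsys{U}$, and $\psi(\relsys{U})\in\Forb(\F)$ because $\F$ is finite and connected and every finite substructure of $\psi(\relsys{U})$ lies in $\psi(\Lifts)\subseteq\Forb(\F)$. Universality is the standard consequence applied to the canonical lift $L(\relsys{A})$ of any $\relsys{A}\in\Forb(\F)$: $L(\relsys{A})$ has age inside $\Lifts$, embeds into $\relsys{U}$, and projecting by $\psi$ gives $\relsys{A}\hookrightarrow\psi(\relsys{U})$. The main obstacle I anticipate is the two-layer rerouting combined with the bookkeeping of Lemma~\ref{kousky}: every sub-piece carved out along $\varphi^{-1}(\psi(\relsys{Z}))$ must be certified as an enumerated piece so that its lift relation is already in the signature, which is precisely where disjointness of the ambient root from the rerouted component is essential.
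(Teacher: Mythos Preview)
Your overall architecture---free amalgam $\relsys{W}$ of the two witnesses over $\psi(\relsys{Z})$, then canonical lift---matches the paper exactly, and your treatment of (ii) via the ``four-step chain'' through $\ext{Y}{j}\to\ext{Z}{j}\to\ext{X}{j}$ is a clean alternative to the paper's minimal-counterexample argument: the paper instead picks a smallest offending piece $\Piece_k$ and uses minimality to guarantee that the sub-pieces carved out by $\varphi^{-1}(A)$ already have their tuples in $L(\relsys{A})$. Your direct route avoids that induction.

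There is, however, a genuine gap in your argument for (i). You assert that once a minimal cut $R\subseteq\varphi^{-1}(\psi(\relsys{Z}))$ is chosen, ``every resulting piece $\Piece_k$ of $\relsys{F}$ is sent entirely into $W(\relsys{X})$ or $W(\relsys{Y})$''. This is false in general: a component of $\relsys{F}\setminus R$ may still contain vertices of $\varphi^{-1}(\psi(\relsys{Z}))\setminus R$, and through such vertices it can reach both $W(\relsys{X})\setminus\psi(\relsys{Z})$ and $W(\relsys{Y})\setminus\psi(\relsys{Z})$. (Concretely: take $\relsys{F}$ a path $a\!-\!b\!-\!c\!-\!d\!-\!e$ with $\varphi(a)\in W(\relsys{X})\setminus C$, $\varphi(b),\varphi(d)\in C$, $\varphi(c)\in W(\relsys{X})\setminus C$, $\varphi(e)\in W(\relsys{Y})\setminus C$; then $R=\{b\}$ is a minimal cut inside $\varphi^{-1}(C)$, but the piece on the $\{c,d,e\}$ side straddles both witnesses.) So your single-level rerouting for (i) does not go through as written.

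The paper avoids this by deriving (i) \emph{from} (ii) rather than proving it independently: having established that $L(\relsys{D})$ induces $L(\relsys{A})$ on $A$ (hence $\relsys{X}$ on $X$, hence $\relsys{Z}$ on $Z$), it takes any minimal cut $E\subseteq\varphi^{-1}(C)$ and observes that for \emph{every} piece $\Piece_j$ of $\relsys{F}$ rooted at $E$---regardless of where $\varphi$ sends it---the tuple $\varphi(\overrightarrow{E})$ lies in $\ext{Z}{j}$; the pieces then reassemble to a copy of $\relsys{F}$ inside any witness of $\relsys{Z}$, contradicting $\relsys{Z}\in\Lifts$. You can repair your (i) either this way, or by running your own (ii)-style component-wise rerouting on $\relsys{F}$ directly (components of $\relsys{F}\setminus\varphi^{-1}(C)$, not pieces from a minimal cut), which does land each component on one side.
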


\begin{proof}
The class $\Lifts$ is clearly hereditary, isomorphism closed and has the joint
embedding property. $\Lifts$ is countable, because there are only countably
many structures in $\Forb(\F)$ (because type $\Delta$ is finite) and thus also
countably many lifts. Thus to show that $\Lifts$ is an amalgamation class it remains to
verify that $\Lifts$ has the amalgamation property.

Consider $\relsys{X},\relsys{Y},\relsys{Z}\in \Lifts$. Assume that structure
$\relsys{Z}$ is substructure induced by both $\relsys{X}$ and
$\relsys{Y}$ on $Z$ and without loss of generality assume that $X\cap Y=Z$.

Put
$$\relsys{A}=W(\relsys{X}),$$
$$\relsys{B}=W(\relsys{Y}),$$
$$\relsys{C}=\sh(\relsys{Z}).$$

Now consider $\relsys{D}$, the free amalgam of $\relsys{A}$ and $\relsys{B}$
over $\relsys{C}$.  By Lemma~\ref{lem:ama}, $\relsys{D}$ is a witness of
$\relsys{Z}$ and also a witness of $\relsys{A}$ and $\relsys{B}$.  Now find
$\relsys{E}\in \Forb(\F)$ containing $\relsys{D}$ as an substructure such that
$L(\relsys{E})$ is complete on $D$.
It follows structure induced on $D$ on $L(\relsys{E})$ is the amalgamation of $\relsys{X}$ and $\relsys{Y}$ over $\relsys{Z}$.

We show the model-completeness of $\sh(\relsys{U})$ as follows: 
Recall that a structure is model complete if every formula is equivalent to an existential formula.
 We use the fact that $\relsys{U}$ is model complete (because every ultrahomogeneous structure is model complete).
 Given a formula $\phi$ in the language of $\sh(\relsys{U})$,
by a model completeness of $\relsys{U}$ one can obtain an equivalent existential formula $\phi'$ in the language of $\relsys{U}$.
We translate this formula back to language of $\sh(\relsys{U})$ by replacing every $\vec{v}\in \ext{U}{i}$ by
an existential formula testing the presence of a rooted homomorphism $f$ from $\Piece_i\to \sh(\relsys{U})$ such that
$f(\overrightarrow{R}_i)=\vec{v}$.
Now observe that by completeness of witnesses one can also turn $\vec{v}\notin \ext{U}{i}$ into an existential
formula describing all cases where a homomorphism from $\Piece_i \to \sh(\relsys{U})$ with $f(\overrightarrow{R}_i)=\vec{v}$
would produce a homomorphic image of some $\relsys{F}\in \F$. In other words, for every piece $\Piece_i$ there is a set
of rooted structures $\overline{\Piece}_i^1$, $\overline{\Piece}_i^2$, \ldots,$\overline{\Piece}_i^n$
such that $\vec{v}\notin \ext{U}{i}$ imply existence of $m$, $1\leq m\leq n$ and a rooted homomorphism
$g:\overline{\Piece}_i^m\to \sh(\relsys{U})$ such that the root is mapped to $\vec{v}$ and moreover the free
amalgam of $\overline{\Piece}_i^m$ and $\Piece_i$ is not in $\Forb(\F)$.
The same observations also give that automorphism groups of the lift and the shadow are equivalent.
\end{proof}

\section{Bounding arities}
\label{bounded}

The expressive power of lifts can be limited in several ways. For example, it
is natural to restrict arities of the newly added relations.  It follows from
the above proof that the arities of new relations in our lifted amalgamation
class $\Lifts$ depend on the maximal size of an minimal separating g-cut of the
forbidden structure.

In this section we completely characterise the minimal arity of
ultrahomogeneous lifts of classes $\Forb(\F)$ where $\F$ is a finite family of
connected structures.  This involves a non-trivial Ramsey type statement stated
bellow as Lemma \ref{sets}. As a warm up, we first show that generic universal
graph for the class $\Forb(C_5)$ can not be constructed by finite monadic
lifts:

Consider, for contradiction, that there exists a monadic lift $\relsys{U}'$ which is both a generic relational structure and whose
shadow $\relsys{U}$ is universal for the class $\Forb(C_5)$. Since all extended
relations are monadic, we can view them as a finite colouring of vertices.
For $v\in \relsys{U}$ we will denote by $c(v)$ the colour of $v$ or, equivalently, set of
all extended relations $\ext{U}{i}$ such that $(v)\in \ext{U}{i}$.

Since graphs in $\Forb(C_5)$ have unbounded chromatic number, we know that
chromatic number of $\relsys{U}$ is infinite. Consider decomposition of
$\relsys{U}$ implied by $c$. Since the range of $c$ is finite, one of the
graphs in this decomposition has infinite chromatic number. Denote this subgraph
by $\relsys{S}$.

In fact is suffices that $\relsys{S}$ is not bipartite. Thus $\relsys{S}$ contains an odd circle. The shortest odd cycle has length $\geq 7$ and thus $\relsys{S}$ contains
 induced
path of length 3 formed by vertices $p_1,p_2,p_3,p_4$.  Additionally there is a vertex $v$ of degree at least 2.  Because the graph is triangle free, the vertices $v_1$ and $v_2$ connected to $v$ are not connected by an edge.

From genericity of $\relsys{U}$ we know that the partial isomorphism mapping
$v_1\to p_1$ and $v_2\to p_4$ can be extended to an automorphism $\varphi$ of $\relsys{U}$.
The vertex $\varphi(v)$ is connected to $p_1$ and $p_4$ and thus together with
$p_1,p_2,p_4$ contains either triangle or $5$-cycle.  It follows that generic $\relsys{U}$ cannot be defined by monadic lifts of $\Forb(\F)$.

In this section we prove that there is nothing special here about arity 2 and about the pentagon. We can determine the minimal arity of the ultrahomogeneous lifts for general classes $\Forb(\F)$. Towards this end we shall need a Ramsey type statement which we formulate after introducing the following:

Let $S$ be a finite set with partition $S_1\cup S_2\cup\ldots\cup S_n$. For $v\in S$ we denote by $i(v)$ the index $i$ such that $v\in S_i$. Similarly, for a tuple $\vec{x}=(x_1,x_2,\ldots,x_t)$ of elements of $S$ we denote by $i(\vec{x})$ the tuple $(i(x_1), i(x_2),\ldots,i(x_t))$. A tuple $\vec{x}$ is called {\em transversal} if all indices $i(x_j)$ are distinct. We make use of the following:
\begin{lem}
\label{sets}
For every $n\geq 2$, $1\leq r<n$ and $K$ integers there is relational structure $\relsys{S}=(S,\relS)$, with vertices $S=S_1\cup S_2\cup\ldots\cup S_n$ (sets $S_i$ are mutually disjoint) and single relation $\relS$ of arity $2n$ with the following properties:
\begin{enumerate}
  \item Every $2n$-tuple $(v_1,u_1,v_2,u_2,\ldots, v_n,u_n)\in \relS$ is in the following form:
  \begin{enumerate}
   \item $v_1$, $v_2$, \ldots, $v_n$ and $u_1$, $u_2$, \ldots, $u_n$ transversal,
   \item $v_i\neq u_i$ $i=1,2,\ldots, n$.
  \end{enumerate}
  \item For every $\vec{v},\vec{u}\in \relS$, $\vec{v}\neq\vec{u}$, $\vec{v}$ and $\vec{u}$ have at most $r$ common vertices.
  \item 
For every colouring of transversal $r$-tuples on $\relsys{S}$ using $K$ colours there is $2n$-tuple $(v_1,u_1,v_2,u_2,\ldots, v_n,u_n)\in \relS$ such that for any $1\leq i_1<i2<\ldots<i_r\leq n$ all the transversal r-tuples $\vec{x}$ with $i(\vec{x})=(i_1,i_2,\ldots,i_r)$ have the same colour.
\end{enumerate}
\end{lem}
\begin{proof}
This statement follows from results obtained by Ne\v set\v ril and R\"odl
\cite{Nevsetvril1981}. Although not stated explicitely, this is ``partite version''
of the main result of \cite{Nevsetvril1981}. It can be also obtained directly by
means of amalgamation method, see \cite{Nevsetvril1995,Nevsetvril1987}. In this paper this result plays
auxiliary role only and we omit the proof.
\end{proof}
Let $\relsys{S}=(S,\relS)$ be a relational structure with relation $\relS$ of arity $2n$. Given a rooted relational structure $(\relsys{A}, \overrightarrow{R})$ of type $\Delta$ with $\overrightarrow{R}=(r_1,r'_1,r_2,r'_2,\ldots, r_n,r'_n)$ we denote by $\relsys{S}*(\relsys{A},\overrightarrow{R})$ the following relational structure $\relsys{B}$ of type $\Delta$:

\begin{enumerate}
\item 
The vertices of $\relsys{B}$ are equivalence classes of equivalence $\sim$ on $\relS \times A$
generated by the following pairs:

$$(\vec{v}, r_i)\sim (\vec{u}, r_i) \hbox{ iff } \vec{v}_{2i}=\vec{u}_{2i},$$
$$(\vec{v}, r'_i)\sim (\vec{u}, r'_i) \hbox{ iff } \vec{v}_{2i+1}=\vec{u}_{2i+1},$$
$$(\vec{v}, r_i)\sim (\vec{u}, r'_i) \hbox{ iff } \vec{v}_{2i}=\vec{u}_{2i+1}.$$

\item
Denote by $[\vec{v}, r_i]$ the equivalence class of $\sim$ containing $(\vec{v}, r_i)$.
We put $\vec{v}\in \rel{B}{j}$ if and only of $\vec{v}= ([\vec{u},v_1],[\vec{u},v_2],\ldots, [\vec{u},v_t])$ for some $\vec{u}\in \relS$ and $(v_1,v_2,\ldots, v_t)\in \rel{A}{j}$.
\end{enumerate}

This construction is commonly used in the graph homomorphism context as {\em indicator construction}, see e.g. \cite{Hell2004}. It essentially means replacing every tuple of $\relS$ by disjoint copy of $\relsys{A}$ with roots $\overrightarrow{R}$ identified with vertices of the tuple.

For given vertex $v$ of $\relsys{S}*(\relsys{A},\overrightarrow{R})$ such that $v=[\vec{u}, r_i]$ (or $v=[\vec{u}, r'_i]$) we will call vertex $v'=\vec{u}_{2i}$ (or $v'=\vec{u}_{2i+1}$ respectively) {\em the vertex corresponding to $v$ in $\relsys{S}$}.  Note that this gives correspondence between vertices of $\relsys{S}$ and those vertices of $\relsys{S}*(\relsys{A},\overrightarrow{R})$ restricted to vertices $[\vec{v}, r_i]$ and $[\vec{v}, r'_i]$.

Finite family finite relational structures is called {\em minimal} if and only if there
all structures in $\F$ are cores and there is no homomorphism in between two
structures
in $\F$.

The following is the main result of this section.

\begin{thm}
\label{aritythm}
Denote by $\F$ minimal family of finite connected relational structures. There is lift of class $\Age(\Forb(\F))$ which is an amalgamation class and which contains new relations of arity at most $r$ if and only if all minimal g-cuts of $\relsys{F}\in \F$ consist of at most $r$ vertices.
\end{thm}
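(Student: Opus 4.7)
The \emph{if} direction is immediate from Theorems~\ref{mainthm} and~\ref{reprezent2}: in the canonical lift $\Lifts$, each new relation has arity $|\overrightarrow{R}_i|$ for some piece $\Piece_i$, equal to the size of a minimal cut of some $\relsys{F}\in\F$. If all minimal cuts have size $\leq r$, then $\Lifts$ is the required amalgamation class of lifts of arity $\leq r$ with shadow $\Forb(\F)$.

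For the \emph{only if} direction I would argue by contradiction. Suppose some $\relsys{F}\in\F$ has a minimal cut $C$ with $|C|=m>r$, yet some amalgamation class $\K$ of lifts, with all new relations of arity $\leq r$, satisfies $\psi(\K)=\Forb(\F)$. Let $\Piece_1=(\relsys{P}_1,\overrightarrow{C}),\ldots,\Piece_k=(\relsys{P}_k,\overrightarrow{C})$ be the pieces of $\relsys{F}$ at $C$, so $k\geq 2$. Minimality of $\F$ forces each $\relsys{P}_j\in\Forb(\F)$ (since $\relsys{P}_j\subsetneq\relsys{F}$ and no $\relsys{F}'\in\F$ admits a homomorphism into a proper substructure of $\relsys{F}$), hence each has a lift in $\K$. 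For each $j$ let $T_j$ denote the set of lifted structures $\relsys{Z}$ on $\overrightarrow{C}$ to which some lift of $\relsys{P}_j$ in $\K$ restricts. The key \emph{incompatibility observation} is that $\bigcap_j T_j=\emptyset$: otherwise, picking lifts of each $\relsys{P}_j$ restricting to a common $\relsys{Z}$ and amalgamating them iteratively in $\K$ over $\relsys{Z}$ produces a structure in $\K$ whose shadow is the free amalgam of the $\relsys{P}_j$ over $\overrightarrow{C}$, namely $\relsys{F}$ itself, contradicting $\relsys{F}\notin\Forb(\F)$.

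The contradiction is then obtained by producing $\tau\in\bigcap_j T_j$ via Lemma~\ref{sets}. Since new arities are bounded by $r<m$, each lifted structure on $\overrightarrow{C}$ is determined by the finitely many lifted types of its $\leq r$-subtuples; let $K$ bound their number. Take $\relsys{A}=\relsys{P}_1\cup\relsys{P}_2$ (disjoint union) as a rooted structure whose $2m$ roots are the two disjoint copies of $\overrightarrow{C}$ in the $\relsys{P}_1$- and $\relsys{P}_2$-parts, and form $\relsys{B}=\relsys{S}*(\relsys{A},\overrightarrow{R})$ with $\relsys{S}$ supplied by Lemma~\ref{sets} for parameters $n=m$, $r$, $K$. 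One checks $\relsys{B}\in\Forb(\F)$: any homomorphism $\relsys{F}'\to\relsys{B}$ with $\relsys{F}'\in\F$ must, by the connectedness of $\relsys{F}'$ and the block structure of the indicator construction, factor through a single $\relsys{A}$-copy and then through one of its two $\relsys{P}_j$-summands, which is excluded by minimality of $\F$. Lift $\relsys{B}$ in $\K$; the lifted structure restricts to a $K$-coloring of root-$r$-tuples in $\relsys{S}$, and Lemma~\ref{sets} yields a Ramsey tuple $\vec{v}\in\relS$. The paired partite structure $(v_i,u_i)\in S_i$ then forces the two copies of $\overrightarrow{C}$ inside the $\relsys{A}$-copy at $\vec{v}$ (one attached to $\Piece_1$, the other to $\Piece_2$) to agree on every $\leq r$-subtuple, hence to carry identical lifted type $\tau\in T_1\cap T_2$. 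Iterating this pairwise Ramsey argument over $T_1,T_2,\ldots,T_k$ (at each step combining the current witness type with a new piece in the role of $\relsys{P}_2$) pins down a common $\tau\in\bigcap_j T_j$, yielding the desired contradiction.

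\textbf{Main obstacle.} The delicate step is the final Ramsey stage: one must arrange that a single $\tau$ is admissible for \emph{every} piece $\Piece_j$ simultaneously, not only for one pair. This is precisely where the paired partite form of Lemma~\ref{sets} is crucial, together with an iterative application across pieces (or, equivalently, a mild generalization of Lemma~\ref{sets} to multi-partition tuples). The verification that the indicator construction $\relsys{B}$ remains in $\Forb(\F)$, despite encoding roots of several pieces, also hinges on connectedness of the $\relsys{F}'\in\F$ and the minimality hypothesis on $\F$.
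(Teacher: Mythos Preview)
Your overall strategy matches the paper's: both build the indicator structure $\relsys{B}=\relsys{S}*(\relsys{P}_1\sqcup\relsys{P}_2,\overrightarrow{R})$ from Lemma~\ref{sets} and then exploit the Ramsey tuple. The one cosmetic difference---you lift $\relsys{B}$ arbitrarily and use amalgamation via your incompatibility observation $\bigcap_j T_j=\emptyset$, whereas the paper embeds $\relsys{B}$ into the shadow of the \emph{generic} lift and uses ultrahomogeneity to extend the partial isomorphism between the two Ramsey copies of $C$---is a harmless repackaging of the same idea.

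The genuine gap is in your verification that $\relsys{B}\in\Forb(\F)$. The assertion that a homomorphism from a connected $\relsys{F}'$ must ``factor through a single $\relsys{A}$-copy'' is false: distinct copies of $\relsys{A}$ in the indicator construction share root vertices (those coming from $S$), so a connected structure can thread through several copies via these shared roots. The paper's argument is substantially different. It uses the natural retraction $f:\relsys{B}\to\relsys{F}$ (each copy of $\relsys{P}_1\sqcup\relsys{P}_2$ folds onto $\relsys{F}$; this is well-defined because a shared vertex in $S_i$ always maps to $r_i\in C$). Any $\varphi:\relsys{F}\to\relsys{B}$ then composes to an endomorphism $f\circ\varphi$ of the core $\relsys{F}$, hence an automorphism, forcing $\varphi$ to be injective. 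One next argues that the connected sets $\relsys{P}_1\setminus C$ and $\relsys{P}_2\setminus C$ each land inside a single copy, attached to \emph{distinct} tuples $\vec{x}_1\neq\vec{x}_2\in\relS$, and that these two tuples must agree on all $m$ coordinates of $C$---contradicting condition~(2) of Lemma~\ref{sets} (any two tuples of $\relS$ share at most $r<m$ vertices), a property you never invoke. For $\relsys{F}'\neq\relsys{F}$ the retraction immediately gives $\relsys{F}'\to\relsys{F}$, violating minimality. Your proposed iteration for $k\geq 3$ pieces is also not sound as written (the type $\tau$ found at one stage need not survive the next), though the paper is equally terse here; your alternative suggestion of generalising Lemma~\ref{sets} to $k$-fold partitions is the right fix and is what ``proceed analogously'' means.
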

\begin{proof}
One direction of this follows from the construction of lift $\Lifts$ given above in the proof of Theorem~$\ref{mainthm}$. This construction
 adds
relations of arities corresponding to the sizes of minimal g-cuts of
$\relsys{F}\in \F$ and thus the sufficiency of Theorem~\ref{aritythm} follows directly from the proof
of Theorem $\ref{mainthm}$.

In the opposite direction fix class $\F$, $r\geq 1$ such that all relational structures
in $\F$ contains minimal separating g-cuts $C=\{r_1,r_2,\ldots, r_n\}$ of size 
at most $n$. Moreover assume that $\relsys{F}\in \F$ has a minimal separating g-cut $C$ of size $n$.  Assume, for contradiction, that there exists lift $\K$ of class
$\Forb(\F)$ such that $\K$ is an amalgamation class and contains new relations
of arities at most $r$. Denote by $K$ the number of different relational structures 
on the set $\{1,2,\ldots, r\}$ which appear in $\K$.
As $C$ is a minimal separating g-cut (see the Definition~\ref{def:separating}) the components of $\relsys{F}\setminus C$ can be enumerated as $\relsys{B}_1$, $\relsys{B_2}$, $\relsys{B}_3$,\ldots, $\relsys{B}_t$ such that
\begin{enumerate}
 \item $C$ is the neighbourhood of $\relsys{B}_1$,
 \item $C$ is the neighbourhood of $\relsys{B}_2$,
 \item neighbourhood of $B_i$, $i>2$ is contained in $\relsys{C}$.
\end{enumerate}
Let $\relsys{F}_0$ be the substructure of $\relsys{F}$ induced by $\overrightarrow{C}=(c_1,c_2,\ldots,c_n)$.

Consider the pieces $\Piece_1=(\relsys{P}_1,\overrightarrow{C}_1)$, $\Piece_2=(\relsys{P}_2,\overrightarrow{C}_2)$ generated by $\overrightarrow{C}$ and
$\relsys{B}_1$ and $\relsys{B}_2$. We assume that $\overrightarrow{C}_1$ and $\overrightarrow{C}_2$ are disjoint sets:
$\overrightarrow{C}_1=(c^1_1,c^2_1,\ldots, c^n_1)$ and
$\overrightarrow{C}_2=(c^1_2,c^2_2,\ldots, c^n_2)$.

Denote also $\Piece=(\relsys{F}\setminus \relsys{B}_1, \overrightarrow{C}_2)$ the rooted structure obtained by deleting $\relsys{B}_1$ form $\relsys{F}$.
Observe that the amalgamation of $\Piece_1$ and $\Piece$ along the roots is forbidden ($\Piece$ is not a piece). On the other hand both $\Piece_1$ and $\Piece_2$
are pieces (however their amalgamation need not to be forbidden).

\begin{figure}
\label{Hconstruct}
\centerline{\includegraphics{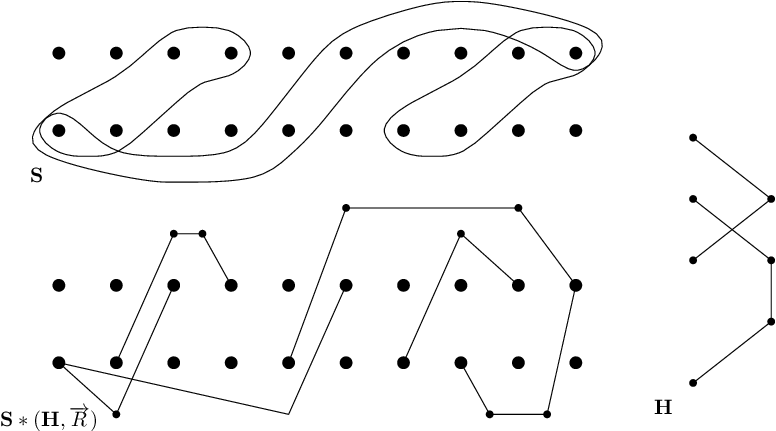}}
\caption{The construction of $\relsys{D}$.}
\end{figure}

Let $H=(H,\overrightarrow{R})$ be disjoint union of $\Piece_1$ and $\Piece_2$ where the root vertices are denoted as $\overrightarrow{R}=(c^1_1, c^2_1, c^1_2, c^2_2,\ldots, c^1_n,c^2_n)$.
In this situation let $\relsys{S}$ be the relational system of arity $2n$ from Lemma~\ref{sets} and we put $\relsys{D}=\relsys{S}*(\relsys{H},\overrightarrow{R})$. For two pieces of $5$-cycle this construction is shown at Figure~\ref{Hconstruct}.

As the family $\F$ is minimal it follows that $\relsys{H}\in \Forb(\F)$. We prove that even $\relsys{D}=\relsys{S}*(\relsys{H},\overrightarrow{R})$ belongs to $\Forb(\F)$. Assume, to contrary, that there exists $\relsys{F}'\in \F$ and a homomorphism $f:\relsys{F}'\to \relsys{D}$. The relational system $\relsys{S}$ homomorphically maps to a relational structure consisting of single tuple $(\{C_1,\ldots,C_n\},\{(C_1,\ldots,C_n)\})$ (by mapping $x\to i$ for $x\in S_i$) and thus $\relsys{S}*(\relsys{H},\overrightarrow{R})$ homomorphically maps to structure $\relsys{F}$ (by mapping $g:[(\overrightarrow{r},x)]\to x$).

It follows that $g\circ f$ is a homomorphism $\relsys{F}'\to \relsys{F}$ and as $\F$ is a minimal family we get that $\relsys{F}'=\relsys{F}$ and, moreover, $g\circ f$ is an automorphism of $\relsys{F}$. It also follows that $f$ is an embedding of $\relsys{F}$ into $\relsys{D}$, see Figure~\ref{fig:maps}.
\begin{figure}
\centerline{\xymatrix{\relsys{F} \ar[r]^-f \ar[rd]^h & \relsys{S}*(\relsys{H},\overrightarrow{R}) \ar[d]^g\\ & \relsys{F}}}
\caption{Embeddings between $\relsys{F}$ and $\relsys{D}$.}
\label{fig:maps}
\end{figure}

Now $g([(\overrightarrow{R},c^j_i)])=c_i$ for $j=1,2$, $i=1,\ldots, n$, and $g([(\overrightarrow{R},x)])=x$ for all $x\in \relsys{H}\setminus R$.

Consider the isomorphism $h=(g\circ f)^1:\relsys{F}\to \relsys{F}$. Put $\overline{C}$, $\overline{B}_1$, $\overline{B}_2$, \ldots,$\overline{B}_t$, $\overline{\Piece}$, $\overline{\Piece}_1$, $\overline{\Piece}_2$ the isomorphic copies of $C$, $B_1$, $B_2$, \ldots, $B_t$, $\Piece$, $\Piece_1$ and $\Piece_2$.
We have that for every $i$ there are $\vec{u},\vec{v}$ such that $f(h(c_i))\in\{[(\vec{u},c^i_1)],[(\vec{v},c^i_2)]\}$. However $\overline{B}_1$ and $\overline{B}_2$ are connected and have $\overline{C}$  as their neighbourhood and thus $f(h(c_i))=[(\vec{u},c^i_1)]$ for some $\vec{u}$ and every $i$ or $f(h(c_i))=[(\vec{u},c^2_1)]$ for some $\vec{u}\in \relS$ and every $i$.
This follows from definition of $D=\relsys{S}*(\relsys{H},\overrightarrow{R})$: $D\setminus S$ is disconnected graph formed by copies of $B_1, B_2, \ldots, B_t$.
But applying again the connectivity and the definition of $\relsys{S}*(\relsys{H},\overrightarrow{R})$ there exists $\vec{u}$ such that $f(x)=[(\vec{u},x)]$ as distinct $\vec{u}$ intersect in at most $r$-points.
However this is a contradiction as then $f$ cannot be an embedding. It follows that $\relsys{D}\in \Forb(\F)$.

Take generic lift $\relsys{U}\in \K$ which uses at most $r$-tuples.  Every embedding $\Phi:\relsys{D}\to \sh(\relsys{U})$ ($\sh(\relsys{U})$ is shadow of $\relsys{U}$) imply $K$ colouring of $r$-tuples of $D$ (colours are defined by the additional relations of $\relsys{U}$) and thus also 
$K$ colouring of $r$-tuples of $\relsys{S}$.  Subsequently, using Lemma \ref{sets}, there is tuple $\vec{v}\in \relsys{S}$, such that $\vec{v}=(u_1,v_1,u_2,v_2,\ldots, u_n,v_n)$ and the relations added by lift $\K$ are equivalent on $\Phi(u_1,u_2,\ldots, u_n)$ and $\Phi(v_1, v_2,\ldots, v_n)$. Thus $\relsys{U}$ induce on both sets $\{\Phi(u_1),\Phi(u_2),\ldots, \Phi(u_n)\}$ and $\{\Phi(v_1),\Phi(v_2),\ldots, \Phi(v_n)\}$ same lift $\relsys{X}$. ($\relsys{X}$ is a lift of relational structure induced by $\relsys{F}$ on $C$.)   From genericity of the $\relsys{U}$ this partial isomorphism extends to automorphism $\Psi$ of $\relsys{U}$. From the construction of relational system $\relsys{D}$ this mapping $\Psi$ sends a root of image of piece $\Piece_1$ to corresponding roots of image of piece $\Piece_2$ and thus shadow of $\relsys{U}$ contains copy of $\relsys{F}\in \F$, a contradiction.
\end{proof}
\section {Special cases of small arities}
\label{monadic}

By Theorem \ref{aritythm} it follows that only minimal classes of
finite relational structures $\F$ such that class $\Forb(\F)$ has a monadic
lift forming an amalgamation class are precisely classes $\F$ such that
all minimal separating g-cuts of their Gaifman graph have size 1.
Examples forming an amalgamation class include graphs with all its blocks being complete graphs.

Consider even more restricted classes $\F$ of structures consisting from
(relational) trees only (relational trees are formally defined bellow). In this case we can claim a much stronger result:
there exists a finite  universal object $\relsys{D}$ which is a retract of an
universal structure $\relsys{U}$.  This gives a new proof of main theorem of \cite{Nesetril2000}
and also puts our main theorem in a new context.

\subsection{Finite Dualities and Constraint Satisfaction Problems}

\label{Duality}
A Constraint Satisfaction Problem (CSP) is the following decision problem:

\smallskip

\noindent
Instance: A finite structure $\relsys{A}$

\noindent
Question: Does there exists a homomorphism $\relsys{A}\to\relsys{H}$?
\smallskip

We denote by $\CSP(\relsys{H})$ the class of all finite structures $\relsys{A}$ with $\relsys{A}\to\relsys{H}$.
It is easy to see that class $\CSP(\relsys{H})$ coincides with a particular instance of lifts and shadow. 

A {\em finite duality} (for structures of given type) is any equation
$$\Forb(\F)=CSP(\relsys{D})$$
where $\F$ is s finite set \cite{Nesetril1978,Nesetril2000,Hell2004}. $\relsys{D}$ is called {\em dual of $\F$}. We also write $\relsys{D}_\F$ for dual of $\F$ (it is easy to see that $\relsys{D}_\F$ is up to homomorphism equivalence uniquely determined). The pair $(\F,\relsys{D})$ is called {\em dual pair}.  In a sense duality is a simple constraint satisfaction problem: the existence of homomorphism into $\relsys{D}$ (i.e. a $\relsys{D}$-colouring) is equivalently characterised by a finite set of forbidden substructures. Dualities play a role not only in the complexity problems but also in logic, model theory, partial orders and categories. Particularly it follows from \cite{Atserias2008} and \cite{Rossman2005} that dualities coincide with those first order definable classes which are homomorphism closed. 

Finite dualities for monadic lifts include all classes $CSP(\relsys{H})$. We formulate this as follows:

\begin{prop}
\label{prop21}
For a class $\K$ of structures the following two statements are equivalent:
\begin{enumerate}
\item $\K=\CSP(\relsys{H})$ for finite $\relsys{H}$.
\item There exists a class $\K'$ of monadic lifts such that:
\begin{enumerate}
\item The shadow of $\K'$ is $\K$,
\item $\K'=\Forb(\F')\cap \Forbi(\relsys{K}_1)$ where $\F'$ is a finite set of monadic covering lifts of edges (i.e. every $\relsys{F}\in\F'$ contains at most one non-unary tuple). $\Forbi(\relsys{K}_1)$ means that every vertex belongs to a unary lifted tuple.
\end{enumerate}
\end{enumerate}
\end{prop}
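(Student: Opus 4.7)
The plan is to prove the equivalence by exhibiting an explicit translation in each direction. For $(1)\Rightarrow(2)$, given a finite target $\relsys{H}$ on universe $H=\{h_1,\ldots,h_n\}$, I would enlarge $\Delta$ by unary relation symbols $U_{h_1},\ldots,U_{h_n}$, with the intention that a lift $\relsys{X}$ of $\relsys{A}$ records a candidate homomorphism $\varphi\colon\relsys{A}\to\relsys{H}$ via $v\in U_h^{\relsys{X}}\iff\varphi(v)=h$. The family $\F'$ is then taken to consist of: (i)~for each unordered pair $h\neq h'\in H$, the single-vertex lift carrying both tags $U_h$ and $U_{h'}$; and (ii)~for each $R\in\Delta$ of arity $k$ and each tuple $(h_{i_1},\ldots,h_{i_k})\notin R^{\relsys{H}}$, each lift consisting of a single $R$-tuple whose $j$-th entry is tagged by $U_{h_{i_j}}$ (one lift per compatible pattern of vertex identifications in the tuple). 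The structure $\relsys{K}_1$ is taken to be the single vertex carrying no unary tag, so that $\Forbi(\relsys{K}_1)$ forces every vertex in any $\relsys{X}\in\K'$ to carry at least one tag. All members of $\F'$ contain at most one non-unary tuple and have every vertex covered by a unary relation, meeting the syntactic restriction. The lifts in $\K'$ then correspond bijectively to functions $\varphi\colon A\to H$ preserving each relation, that is, to homomorphisms $\relsys{A}\to\relsys{H}$, whence $\psi(\K')=\CSP(\relsys{H})$.

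For $(2)\Rightarrow(1)$, given $\K'=\Forb(\F')\cap\Forbi(\relsys{K}_1)$ of the stated form with unary lifted symbols $U_1,\ldots,U_m$, I would define $\relsys{H}$ by declaring its universe to be the set of all non-empty subsets $\tau\subseteq\{U_1,\ldots,U_m\}$ such that the single-vertex lift tagged exactly by $\tau$ admits no homomorphism from any $\relsys{F}\in\F'$. For each $R\in\Delta$ of arity $k$, I put $(\tau_1,\ldots,\tau_k)\in R^{\relsys{H}}$ iff the $k$-vertex lift whose single $R$-tuple $(v_1,\ldots,v_k)$ has $v_i$ tagged exactly by $\tau_i$ admits no such homomorphism; $\relsys{H}$ is finite because the lifted signature is. The equivalence $\psi(\K')=\CSP(\relsys{H})$ is then verified in both directions. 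Starting from $\relsys{X}\in\K'$ and setting $\varphi(v)$ to be the tag set of $v$, the heredity of $\Forb(\F')$ applied to each induced single-vertex and to each induced single-$R$-edge sublift of $\relsys{X}$ shows that $\varphi(v)\in H$ and that $\varphi$ is a homomorphism to $\relsys{H}$. Conversely, given $\varphi\colon\relsys{A}\to\relsys{H}$, the lift obtained by tagging each $v$ with the elements of $\varphi(v)$ is in $\K'$: any homomorphism from some $\relsys{F}\in\F'$ into it would have to factor through a single-vertex or a single-$R$-edge sublift, both of which are excluded by the definitions of $H$ and $R^{\relsys{H}}$.

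\textbf{Main obstacle.} The delicate point is controlling the possible homomorphisms $\relsys{F}\to\relsys{X}$ for $\relsys{F}\in\F'$: such a map may collapse several vertices of $\relsys{F}$ onto one vertex of $\relsys{X}$ and may send the unique non-unary tuple of $\relsys{F}$ to any $R$-compatible tuple of $\relsys{X}$. The hypothesis that every $\relsys{F}\in\F'$ contains at most one non-unary tuple is exactly what guarantees that the image of any such homomorphism is contained in a single-vertex or a single-$R$-edge induced sublift of $\relsys{X}$, so that existence of a bad homomorphism is decided by purely local data; this is precisely the data packaged in the vertices and relations of $\relsys{H}$. The companion hypothesis $\Forbi(\relsys{K}_1)$ is needed in the direction $(2)\Rightarrow(1)$ so that $\varphi(v)$ is always non-empty and therefore a genuine vertex of $\relsys{H}$.
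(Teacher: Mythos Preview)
Your proposal is correct and follows essentially the same approach as the paper's own (very terse) sketch: the paper dismisses $(1)\Rightarrow(2)$ as ``obvious'' and, for $(2)\Rightarrow(1)$, defines $\relsys{H}$ exactly as you do---one vertex for each consistent combination of unary lifted relations, and a tuple $\vec{x}\in\rel{H}{i}$ precisely when the single-edge lift carrying those tag combinations lies in $\Forb(\F')$. Your write-up is considerably more detailed than the paper's; in particular your identification of the key point---that the ``at most one non-unary tuple'' hypothesis forces any homomorphism $\relsys{F}\to\relsys{X}$ to be detected by a single-vertex or single-edge induced sublift---is exactly the mechanism the paper's sketch relies on implicitly but does not articulate.
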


\begin{proof}[Proof (sketch)]
1. obviously imply 2.

In the opposite direction construct $\relsys{H}$ as follows:  Let
$\relsys{H}_0$ be lift with vertex for every consistent combination of new
relations  $\ext{H_0}{i}$ and relations $\rel{H_0}{i}$ empty.  Now construct lift
$\relsys{H}$ on same vertex set as $\relsys{H}_0$ with $\ext{H}{i}=\ext{H_0}{i}$.
Put tuple $\vec{x}\in \rel{H}{i}$ if and only if the structure induced by $\vec{x}$ is on
$\relsys{H}_0$ with $\vec{x}$ added to $\rel{H}{i}$ is in $\Forb(\F')$.
Consequently if $\Age(\Forb(\F'))$ is an amalgamation class then $\Age(Forb(\F))$ is an amalgamation class too.
\end{proof}

In the language of dualities this amounts to saying that the classes $\CSP(\relsys{H})$ are just classes described by shadow dualities of the simplest kind: forbidden lifts are just vertex coloured edges.

A (relation) tree can be defined as follows: The {\em incidence graph} $ig(\relsys{A})$ of relational structure $\relsys{A}$ is the                      
bipartite graph with parts $A$ and                                  
$$                                                                              
Block(A) = \{ (i,(a_1, \ldots, a_{\delta_i})) : i \in I,                       
(a_1, \ldots, a_{\delta_i}) \in \rel{A}{i} \},                                      
$$                                                                              
and edges $[a, (i,(a_1, \ldots, a_{\delta_i}))]$ such that                      
$a \in (a_1, \ldots, a_{\delta_i})$. (Here we write                             
$x \in (x_1, \ldots, x_n)$ when there exists an index $k$                       
such that $x = x_k$; $Block(A)$ is a multigraph.)  $\relsys{A}$ is called a {\em tree} when                 
$ig(\relsys{A})$ is a graph tree (see e.g. \cite{Matousek1998}).
The definition of relational trees by the incidence graph $ig(\relsys{A})$ allows us to use graph terminology for relation trees. 

Finite dualities have been characterised (see \cite{Nesetril2000}):
\begin{thm}
For every type $\Delta$ and for every finite set $\F$ of finite relational trees there exists a dual $\Delta$-structure $\relsys{D}_\F$. Up to homomorphism equivalence there are no other dual pairs.
\end{thm}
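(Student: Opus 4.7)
The plan is to split the statement into two directions: \emph{(i)} a set of relational trees admits a dual, and \emph{(ii)} no other classes $\F$ give rise to finite dualities.

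For direction \emph{(i)}, the idea is to run the construction of Theorem \ref{univthm} in the special monadic case. When every $\relsys{F}\in\F$ is a relational tree, its incidence graph is acyclic, so every block of its Gaifman graph is a single tuple. Consequently every inclusion minimal cut of any $\relsys{F}\in\F$ consists of one vertex, and by Theorem \ref{aritythm} every piece $\Piece_i=(\relsys{P}_i,\overrightarrow{R_i})$ has a singleton root. Thus the family of added relations in the lifted class $\Lifts=\Forbi(\F')$ is entirely monadic. Take the generic $\relsys{U}'=\lim\Forbi(\F')$; by Theorem \ref{reprezent2} its shadow $\relsys{U}$ is universal for $\Forb(\F)$. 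Since the lifted signature is finite and monadic, each vertex of $\relsys{U}'$ carries one of finitely many ``colours'' $c(v)\subseteq\{i\in I'\setminus I\}$, and $c$ partitions $U'$ into finitely many classes.

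Now I would define the candidate dual $\relsys{D}$ as the shadow of the quotient of $\relsys{U}'$ by colour equivalence: vertices of $\relsys{D}$ are the realised colours, and a tuple $(c_1,\dots,c_{\delta_i})$ is in $\rel{D}{i}$ iff there exist $v_k\in U'$ with $c(v_k)=c_k$ and $(v_1,\dots,v_{\delta_i})\in\rel{U}{i}$. The colouring map $c:\relsys{U}\to\relsys{D}$ is then, by definition, a homomorphism. I then equip $\relsys{D}$ with the obvious monadic lift $\relsys{D}'$ (putting the colour $c$ on the vertex named $c$) and argue via the explicit criterion in Theorem \ref{reprezent} that $\relsys{D}'\in\Forbi(\F')$: any $\Piece_i$-covering or $\relsys{F}$-covering witnessing the contrary would, by lifting along the colour map, witness the same forbidden pattern in $\relsys{U}'$, contradicting $\relsys{U}'\in\Forbi(\F')$. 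Universality of $\relsys{U}$ then supplies a homomorphism $\relsys{D}\to\relsys{U}$, and composing gives $\relsys{D}\to\relsys{D}$ whose image (after restriction) is a retract, hence $\relsys{D}$ is homomorphism-equivalent to a finite hom-universal structure --- a dual. Finally, for every finite $\relsys{A}\in\Forb(\F)$, universality yields $\relsys{A}\to\relsys{U}\to\relsys{D}$, and conversely $\relsys{B}\to\relsys{D}$ implies $\relsys{B}\in\Forb(\F)$ because $\relsys{D}\in\Forb(\F)$ and $\Forb(\F)$ is closed under homomorphic pre-images; so $\Forb(\F)=\CSP(\relsys{D})$.

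The main obstacle here is verifying that $\relsys{D}'$ lies in $\Forbi(\F')$. In view of the explicit description in Theorems \ref{reprezent} and \ref{reprezent2}, this reduces to showing that any $\relsys{F}$-covering lift (or $\Piece_i$-covering rooted lift) mapping homomorphically into $\relsys{D}'$ can be pulled back through the colour map to a corresponding covering lift mapping into $\relsys{U}'$; this uses that the colours on $\relsys{U}$ already encode exactly the relations $\ext{U'}{i}$ and that arity one is preserved under the quotient. This is the delicate step, but it is forced by the monadic character of $\F'$.

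For the converse direction \emph{(ii)}, suppose $(\F,\relsys{D})$ is a dual pair, i.e.\ $\Forb(\F)=\CSP(\relsys{D})$. I would argue that without loss of generality each $\relsys{F}\in\F$ is a core and that no non-tree $\relsys{F}\in\F$ can be critical: if $\relsys{F}$ contains a cycle in its incidence graph, one can use the standard ``tensor product'' trick (replace $\relsys{F}$ by $\relsys{F}\times\relsys{D}$ via the indicator construction of Section \ref{bounded}) to produce an $\relsys{F}$-free structure that still maps onto every vertex of $\relsys{F}$, contradicting $\relsys{F}\notin\CSP(\relsys{D})$. The detailed argument is classical (e.g.\ \cite{NTardif,NesetrilPultr,Hell}) and I would refer to it; the new content of the theorem, provided by the preceding machinery, is the existence part.
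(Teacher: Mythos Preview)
The paper does not prove this theorem; it is quoted as the known characterisation of finite dualities due to Ne\v set\v ril and Tardif \cite{NTardif}, and the paper's own contribution is Corollary~\ref{nasdual}, which gives a new construction for the \emph{existence} direction only.

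Your direction~(i) is essentially the argument of Corollary~\ref{nasdual}: observe that trees have only one-element minimal cuts, so the lifted class $\Lifts=\Forbi(\F')$ is monadic, take the \Fraisse{} limit $\relsys{U}'$, and collapse it to a finite structure indexed by colours. The paper's verification that the retract lies in $\Lifts$ is shorter than yours: it notes (via Theorem~\ref{reprezent}) that for trees every inclusion-minimal $\relsys{F}$-covering or $\Piece_i$-covering lift is supported on a single tuple, so membership in $\Forbi(\F')$ is a per-tuple condition and each tuple of $\relsys{D}'$ is inherited verbatim (with its colours) from some tuple of $\relsys{U}'$. Your ``pull back a covering along the colour map'' argument would also work, but the single-tuple observation makes it immediate and avoids the delicacy you flag.

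For direction~(ii) the paper offers nothing beyond the citation to~\cite{NTardif}, so deferring to that reference is exactly what the paper does. Your sketch of the converse is not quite the classical argument, though: the proof in~\cite{NTardif} does not use $\relsys{F}\times\relsys{D}$ but rather high-girth (sparse incomparability) constructions to show that a non-tree $\relsys{F}$ cannot be a critical obstruction. Since you explicitly cite the source and claim no novelty there, this does not affect correctness, but the tensor-product remark should be dropped.
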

\label{dualpairs}
A number of constructions of duals for given set $\F$ are known \cite{Nesetril2005}.
It follows from this section that we have a yet another approach to this problem:
\begin{corollary}
\label{nasdual}
Let $\F$ be a set of finite tree-structures, then there exists a finite set of lifted structures $\F'$ with the following properties:
\begin{itemize}
  \item[$(i)$] $\Age(\Forbi(\F'))$ is an amalgamation class (and thus there is universal $\relsys{U}'\in \Forbi(\F')$),
  \item[$(ii)$] all lifts in $\Forbi(\F')$ are monadic,
  \item[$(iii)$] $\sh(\relsys{U'})=\relsys{U}$ is universal for $\K$,
  \item[$(iv)$] $\relsys{U}'$ has a finite retract $\extsys{D}_\F$ and consequently $\sh(\extsys{D}_\F)=\relsys{D}_\F$ is a dual of $\F$.
\end{itemize}
\end{corollary}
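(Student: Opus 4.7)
The plan is to apply Theorem \ref{reprezent2} and then exploit the monadic nature of the resulting lifts to construct a finite retract of the generic lift. Applying Theorem \ref{reprezent2} to the tree family $\F$ yields a finite family $\F'$ of lifts such that $\Forbi(\F')$ is an amalgamation class whose shadow is $\Forb(\F)$; set $\relsys{U}' = \lim \Forbi(\F')$ and $\relsys{U} = \psi(\relsys{U}')$. This gives (i) and (iii). For (ii), observe that in a relational tree $\relsys{F}$ every inclusion-minimal cut of the Gaifman graph consists of a single vertex (removing any non-leaf node of the incidence tree $ig(\relsys{F})$ disconnects both $ig(\relsys{F})$ and $\relsys{G}_\relsys{F}$), so every piece of every $\relsys{F} \in \F$ has a single-vertex root and the auxiliary relations added in the construction preceding Theorem \ref{mainthm} all have arity $1$. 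Hence every lift in $\Forbi(\F')$ is monadic.

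For (iv), write $N = |\Delta' \setminus \Delta|$ and call the monadic type of a vertex $u \in \relsys{U}'$ the subset of $\{1, \ldots, N\}$ indexing the new unary relations containing $u$; the set $T$ of types realized in $\relsys{U}'$ is finite. I would construct $\extsys{D}_\F$ abstractly on vertex set $T$, with monadic relations recording the types and the $i$-th base relation containing exactly those tuples $(t_1, \ldots, t_k) \in T^k$ for which $\relsys{U}'$ contains some tuple of those types in its $i$-th relation. Granting the key lemma $\extsys{D}_\F \in \Forbi(\F')$, the embedding-universality of $\relsys{U}'$ yields an embedding $e : \extsys{D}_\F \to \relsys{U}'$. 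The type-collapse map $c : \relsys{U}' \to \extsys{D}_\F$ sending each vertex to the representative of its type is then a homomorphism of lifts by the definition of the base relations of $\extsys{D}_\F$, and because $e$ preserves monadic types while every type class of $\extsys{D}_\F$ is a singleton, $c \circ e = \mathrm{id}_{\extsys{D}_\F}$. Hence $\extsys{D}_\F$ is a retract of $\relsys{U}'$, its shadow $\relsys{D}_\F = \psi(\extsys{D}_\F)$ lies in $\Forb(\F)$ and is a retract of $\relsys{U}$, and combining this with the universality of $\relsys{U}$ and closure of $\Forb(\F)$ under sources of homomorphisms yields $\Forb(\F) = \CSP(\relsys{D}_\F)$, so $\relsys{D}_\F$ is the required dual.

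The main obstacle is verifying the key lemma $\extsys{D}_\F \in \Forbi(\F')$. An embedding $\iota : \relsys{X} \to \extsys{D}_\F$ for $\relsys{X} \in \F'$ does not immediately pull back to $\relsys{U}'$: a joint realization of all the prescribed tuples of $\relsys{X}$ on a single tuple of representatives may fail even when each tuple is individually realized. To handle this, I would exploit the fact that the forbidden lifts in $\F'$ produced by Theorem \ref{reprezent2} from a family of trees have a particularly simple local structure, since each piece has a single-vertex root and the covering lifts generating $\F'$ are themselves tree-shaped collections of monadically coloured atoms. This should permit an argument analogous to Proposition \ref{prop21}: closure of $\extsys{D}_\F$ under all locally realized tuple types prevents any $\relsys{X} \in \F'$ from embedding, either by a direct check on single-tuple forbidden lifts or by induction along the tree structure of $\relsys{X}$ using the amalgamation property of $\Forbi(\F')$.
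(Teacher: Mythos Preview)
Your approach is essentially the one taken in the paper: construct $\extsys{D}_\F$ with one vertex per monadic type and the maximal set of tuples compatible with membership in $\Lifts$, then observe that the type-collapse map is a retraction. The paper resolves your ``main obstacle'' precisely via the first of the two options you list. The key observation (which you hint at but do not commit to) is that for a relational tree, every inclusion-minimal covering sublift of a piece---and likewise every inclusion-minimal $\relsys{F}$-covering sublift---has its vertices contained in a single relational tuple of the underlying structure. Hence, by Theorem~\ref{reprezent}, the class $\Lifts$ is already described by forbidden (rooted) homomorphisms from single-tuple coloured atoms; your inductive alternative along a tree decomposition of $\relsys{X}\in\F'$ is unnecessary. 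With this in hand, the verification of $\extsys{D}_\F\in\Lifts$ is immediate: a (rooted) homomorphism from such a single-tuple lift into $\extsys{D}_\F$ picks out one tuple of $\extsys{D}_\F$, and by your definition of the base relations this tuple is realized in $\relsys{U}'$ by vertices of exactly the required monadic types, contradicting $\relsys{U}'\in\Lifts$. The paper also notes (as a remark) that one can build $\extsys{D}_\F$ directly without passing through the Fra\"{\i}ss\'e limit, by starting with one vertex per consistent type combination and greedily adding tuples while remaining in $\Lifts$; this agrees with your construction by genericity of $\relsys{U}'$.
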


\begin{proof}[Proof]
Observe that the inclusion minimal g-cuts of a relational tree
are all size 1. Thus for fixed family $\F$ of finite relational trees
 our Theorem \ref{mainthm} establishes the existence monadic lift
that give an generic structure $\relsys{U}'$ whose shadow is (homomorphism) universal for $\Forb(\F)$.

This structure $\relsys{U}'$ is countable.  To get a dual, we find finite
$\relsys{D'}\in \Lifts$ which is retract of $\relsys{U}'$ and still there is homomorphism $\relsys{Y}\to \relsys{U}'$ if and only if there is homomorphism $\relsys{Y} \to \relsys{X'}$.
Construct $\relsys{D}'$ by unifying all
vertices of the same colour.   This gives a homomorphism (retraction) $r:\sh(\relsys{U}')\to
\relsys{D}'$. Put $\relsys{D}=\sh(\relsys{D}')$. We show that $\relsys{D}$ is the dual of $\F$.

For every $\relsys{A}\in \Forb(\F)$ there is an embedding
$e:\relsys{A}\to\sh(\relsys{U}')$.  It follows that $\relsys{A}\in
\CSP(\{\relsys{D}\})$ because $e\circ r$ is a homomorphism $\relsys{A}\to
\relsys{D}$.  To see that $\relsys{D}\notin \Forb(\F)$, assume for a
contradiction that there is $\relsys{F}\in \F$ and a homomorphism
$f:\relsys{F}\to \relsys{D}$. Create lift $\relsys{X}$ from $\relsys{F}$ by
adding a tuple $(v)$ to $\ext{X}{i}$ if and only if $(f(v))\in \ext{D'}{i}$.
Lift $\relsys{X}$ satisfy condition $(a)$ of the definition of $\Lifts$.  For
every $\relsys{F}'\in \F$ a homomorphism $\relsys{F}'\to \relsys{X}$ implies a
homomorphism $\relsys{F}'\to\relsys{U}$ giving (b) and thus $\relsys{X}\in
\Lifts$. A contradiction with $\relsys{U}'$ being generic for $\Lifts$ and
$\sh(\relsys{U}')\in \Forb(\F)$.

\end{proof}

Note that it is also possible to construct $\relsys{D}_\F$ in a finite way
without using the \Fraisse{} limit: for every possible combination of new
relations on single vertex create single vertex of $\relsys{D}_\F$ and then
keep adding tuples as long as possible so $\relsys{D}_\F$ is still in $\Lifts$.

Finally, let us remark that one can also prove that $\extsys{U}$ has a finite presentation in the sense of $\cite{Hubicka2005a}$.

\label{aplikace}

The situation of course changes if we consider more complicated (non-monadic and non-trees) lifted classes $\Forbi(\F')$ and in fact it has been proved in \cite{Kun2008} that any NP language is polynomial-time equivalent to a class $\sh(\Forbi(\F'))$ for a finite set $\F'$ set of lifted structures. As these structures may be supposed to be connected
(by adding dummy edges) we have the following corollary of Theorem
\ref{univthm}:
\begin{corollary}
\label{NPU}
Every NP problem is polynomial-time equivalent to membership problem for a class $\Age(\relsys{U})$ where $\relsys{U}$ is shadow of an ultrahomogeneous structure $\extsys{U}$.
\end{corollary}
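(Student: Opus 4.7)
The plan is to combine the expressiveness theorem of \cite{KunN} with Theorem~\ref{univthm}. By \cite{KunN}, for any NP language $L$ one effectively produces a finite family $\F'$ of lifted relational structures (in some signature $\Delta' \supseteq \Delta$) such that $L$ is polynomially equivalent to the membership problem for $\psi(\Forbi(\F'))$. As remarked immediately before the corollary, by adjoining a fresh binary symbol to $\Delta'$ and inserting ``dummy'' edges in it, we may assume that each structure in $\F'$ is connected; this is a polynomial-time reduction in both directions.

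I would then apply Theorem~\ref{univthm} with $\F'$ now playing the role of the input family $\F$ and the lifted signature $\Delta'$ playing the role of the base signature. The theorem yields a further signature $\Delta'' \supseteq \Delta'$, a finite family $\F''$ of $\Delta''$-structures, and the ultrahomogeneous \Fraisse{} limit $\extsys{U} := \lim \Forbi(\F'')$, whose reduct to $\Delta'$ is universal for $\Forb(\F')$. Let $\relsys{U}$ denote the further reduct of $\extsys{U}$ all the way down to the original signature $\Delta$; then $\relsys{U}$ is the shadow of the ultrahomogeneous structure $\extsys{U}$, as demanded by the statement.

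It remains to verify polynomial equivalence between the membership problems for $\Age(\relsys{U})$ and for $\psi(\Forbi(\F'))$ (and hence for $L$). The inclusion $\Age(\relsys{U}) \subseteq \psi(\Forbi(\F'))$ is immediate: any finite substructure of $\relsys{U}$ lifts to a substructure of $\extsys{U} \in \Forbi(\F'')$, whose $\Delta'$-reduct lies in $\Forb(\F') \subseteq \Forbi(\F')$. For the converse reduction, given an input $\relsys{A} \in \Rel(\Delta)$, I would apply the canonical-lift construction of Section~2 iteratively (first lifting to $\Delta'$, then from $\Delta'$ to $\Delta''$) and test, via the explicit characterization in Theorem~\ref{reprezent2}, whether the resulting $\Delta''$-structure lies in $\Lifts = \Forbi(\F'')$; by amalgamation and standard \Fraisse{} theory this is equivalent to $\relsys{A} \in \Age(\relsys{U})$. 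The main obstacle is this last step, where one must bridge the gap between embedding-forbidding (used by \cite{KunN}) and homomorphism-forbidding (afforded by Theorem~\ref{univthm}); polynomial-time complexity is preserved because both $\F'$ and $\F''$ are explicit and finite, so the canonical lift is polynomial-time computable.
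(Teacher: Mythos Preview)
Your high-level strategy --- invoke \cite{KunN}, make the forbidden lifted structures connected via dummy edges, then apply Theorem~\ref{univthm} --- is exactly the route the paper takes; indeed the paper offers nothing beyond that one sentence as proof.

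The difficulty you yourself label ``the main obstacle'' is genuine, and what you write does not close it. Applying Theorem~\ref{univthm} to the family $\F'$ over base signature $\Delta'$ produces an ultrahomogeneous $\extsys{U}$ whose $\Delta'$-reduct has age equal to the finite members of $\Forb(\F')$; hence the further $\Delta$-reduct $\relsys{U}$ satisfies $\Age(\relsys{U})=\psi(\Forb(\F'))$. But the class delivered by \cite{KunN} is $\psi(\Forbi(\F'))$, and in general $\Forb(\F')\subsetneq\Forbi(\F')$, so $\psi(\Forb(\F'))$ and $\psi(\Forbi(\F'))$ are \emph{different} decision problems. Your justification that ``the canonical lift is polynomial-time computable'' addresses only the running time of a putative reduction, not its correctness: you have exhibited no map sending yes-instances to yes-instances and no-instances to no-instances between the two problems. (Separately, your ``iterated canonical lift from $\Delta$ to $\Delta'$'' is not well-defined: the passage $\Delta\to\Delta'$ in \cite{KunN} has nothing to do with the canonical lift of Section~2, so there is nothing to iterate.) To make the chain connect one must add a genuine step --- for example, enrich $\Delta'$ by complementary relations and an inequality predicate so that every homomorphism from a member of $\F'$ is automatically an embedding (forcing $\Forb(\F')=\Forbi(\F')$) before invoking Theorem~\ref{univthm}, or alternatively verify that the piece-and-witness machinery of Sections~2--3 goes through with embeddings in place of homomorphisms. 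The paper is equally silent on this point, but without some such bridge your reductions do not meet.
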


Constraint Satisfaction Problems $CSP(\relsys{H})$ for countable templates $\relsys{H}$ were investigated in \cite{Bodirsky2003} and it has been shown there that the basic results of so called ``algebraic method'' \cite{Bulatov2005} hold also for $\omega_0$-categorical templates \cite{Bodirsky2003}. Recently, \cite{Bodirsky2011a} proved that $CSP(\relsys{H})$ for general countable template $\relsys{H}$ encodes any problem in NP. It is not known whether $\relsys{H}$ may be chosen $\omega_0$-categorical. Corollary \ref{NPU} complements this: lifts and shadows enable to code any NP language by membership problem for ages of shadows of ultrahomogeneous structures.

\subsection {Forbidden cycles and Urysohn spaces (binary lifts)}

We turn briefly our attention to binary lifts.  This relates some of the earliest results on universal graphs with (recently intensively studied)
the Urysohn spaces:

\label{urysohnsection}
We will consider a finite family $\F$ consisting of graphs of odd cycles of lengths
$3,5,7,\ldots, l$.  As shown by \cite{Komjath1988} (see also \cite{Cherlin1999}) those families have universal
graphs in $\Forbi(\F)$ and as shown by \cite{Cherlin1996} those are the only
classes defined by forbidding a finite set of  cycles.
These classes also form especially easy families of pieces.  In fact each piece
is an undirected path of length at most $l$, where $l$ is the length of longest cycle
in $\F$ with both ends of the patch being roots.  
Applying our lift construction introduced in Section~\ref{sec:basis} we obtain particularly easy
description of the lifted structure.

We can proceed as follows: For vertices $x$, $y$ of graph $G$ put $d(x,y)=(a,b)$
if $a$ (b respectively) is the shortest length of an even (odd respectively) length
of walk from $x$ to $y$.

We call $d$ an {\em even-odd distance} and one can develop a theory analogous to the theory
of metric embeddings. As graphs with odd cycles of length $\leq l$ are just graphs
for which there are no $x$, $y$ with $d(x,y)=(a,b)$ and $a+b\leq l$, we obtain
a common generalisation of universal odd girth graphs and Urysohn type spaces. 

We use the following definition which is motivated by metric spaces.
When specialised to graphs, this definition is analogous to (corrected form) of an
$s$-structure \cite{Komjath1988}. Our approach also gives new easy description (i.e. finite presentation)
of the Urysohn space \cite{Hubicka2008}.  

\begin{defn}
\label{evenoddp}
Pair $(a,b)$ is considered to be {\em even-odd pair} if $a$ is even non-negative integer or $\omega$ and $b$ is odd non-negative integer or $\omega$.

For even-odd pairs $(a,b)$ and $(c,d)$ we say that $(a,b)\leq (c,d)$ if and only if $a\leq c$ and $b\leq d$.  Consider $a+\omega=\omega$ and $\omega+b=\omega$.  Put: $$(a,b)+(c,d) = (\min(a+c,b+d),\min (a+d,b+c)).$$

For a set $S$, function $d$ from $S$ to even-odd pairs is called {\em even-odd distance function on $S$} if following conditions are satisfied:
\begin{enumerate}
\item $d(x, y) = (0,b)$   if and only if   $x = y$,
\item $d(x, y) = d(y, x)$,
\item $d(x, z) \leq d(x, y) + d(y, z)$.

\end{enumerate}
Finally pair $(S,d)$ where $d$ is even-odd distance function for $S$ is called {\em even-odd metric space}.
\end{defn}

Note that the even-odd metric spaces differ from usual notion of metric space
primarily by the fact that the ordering of values of distance function is not
linear, but forms a 2-dimensional partial order.  
But some basic results about metric spaces are valid even in this setting.

As noted above, even-odd metric space form a stronger version of the distance metric on the
graph.  (For graph $\relsys{G}$ we can put $d(x,y)=(a,b)$ where $a$ is length of the
shortest walk of even length connecting $x$ and $y$, while $b$
is the length of shortest walk of odd length.)

The even-odd distance metric specifies length of all possible walks: for a graph
$\relsys{G}$ and a even-odd distance metric $d$ we now have a walk connecting $x$ and
$y$ of length $a$ if and only if $d(x,y)=(b,c)$ having $b\leq a$ for $a$ even or $c\leq
a$ for $a$ odd.

It is well known that universal and homogeneous metric space exists for several classes of
metric spaces \cite{Delhomme2007,The2006}.  Analogously we have:
\begin{thm}
\label{Uthm}
There exists a generic even-odd metric space $\relsys{U}$.
\end{thm}
\begin{proof}
 We prove that class $\M$ of all even-odd metric spaces is an amalgamation class.

To show that $\M$ has the amalgamation property, 
 take a free amalgam $\relsys{D}$ of even-odd metric spaces $\relsys{A}$,
$\relsys{B}$ over $\relsys{C}$. This amalgam is not even-odd metric space, since some distances are not defined.

We can however define a walk from $v_1$ to $v_t$ of length $l$ in $\relsys{D}$ as a sequence of vertices $v_1,v_2,v_3,\ldots, v_t$
and distances $d_1,d_2,d_3,\ldots, d_{t-1}$ such that $\sum_{i=1}^{t-1} d_i=l$ and $d_i$ is present in
the even-odd pair $d(v_i,v_i+1)$ for $i=1,2,\ldots,t-1$.

We then produce even-odd metric space $\relsys{U}$ on same vertex set as
$\relsys{D}$ where distance of some vertices $a,b\in E$ is even-odd pair
$(l,l')$ such that $l$ is the smallest even value such that there exists walk
joining $a$ and $b$ of length $l$ in $\relsys{D}$. $l'$ is the smallest odd value such
that there exists walk from $a$ to $b$ of length $l'$.

It is easy to see that $\relsys{U}$ is even-odd metric space (every triangular
inequality is supported by the existence of a walk) and other properties of
amalgamation class follows from definition.
\end{proof}

The graphs omitting odd cycles up to length $l$ can be axiomatised by simple
condition on their even-odd distance metric: there are no vertices $x,y$ such
that $d(x,y)=(a,b)$ where $a+b\leq l$. Denote by $\K_l$ the class of all countable even-odd metric spaces such that there are no vertices $x,y$ such that $d(x,y)=(a,b)$ with $a+b\leq l$.  The existence of universal and homogeneous even-odd metric space $\relsys{U}_l=(U_l,d_l)$ for the class $\K_l$ is simple consequence of Theorem \ref{Uthm}. In fact $\relsys{U}_l$ is a subspace of $\relsys{U}$ induced by all those vertices $v$ of $\relsys{U}$ satisfying $d(v,v)=(0,b)$ and $b>l$.
On the other hand $\relsys{U}_l$ is induced by a graphs:

\begin{thm}
For metric space $\relsys{U}_l=(U_l,d_l)$ denote by $\relsys{G}_l=(U_l,E_l)$ a graph on vertex set $U_l$ where $\{x,y\}\in E_l$ if and only if $d(x,y)=(a,1)$.

For every choice of odd integer $l\geq 3$, $\relsys{G}_l$ is universal graph for class of all graphs omitting odd cycles of length at most $l$.
\end{thm}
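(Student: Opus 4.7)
The plan is to verify the two halves of universality by moving between graphs and their associated even-odd metric spaces. First I would show that $\relsys{G}_l$ itself omits all odd cycles of length $\leq l$. Second, given any countable graph $\relsys{H}$ omitting odd cycles of length $\leq l$, I would equip $H$ with the canonical even-odd metric coming from shortest even/odd walks, show that the resulting space lies in $\K_l$, and then embed it into $\relsys{U}_l$ via the universality of Lemma \ref{Ulema} (applied to the subspace $\relsys{U}_l$). Finally I would check that this embedding is in fact a graph embedding $\relsys{H}\hookrightarrow \relsys{G}_l$.

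For the first half, suppose $v_1 v_2 \cdots v_k v_1$ were an odd cycle in $\relsys{G}_l$ of length $k\leq l$. By definition of $E_l$, each $d(v_i,v_{i+1})=(a_i,1)$. An easy induction, using the formula for sums of even-odd pairs from Definition \ref{evenoddp}, shows that $\sum_{i=1}^k (a_i,1)$ has odd component at most $k$ (choose the odd coordinate $1$ at each step, which produces the odd total $k$). Iterating the triangle inequality around the cycle thus yields $d(v_1,v_1)=(0,b)$ with $b\leq k\leq l$, contradicting $\relsys{U}_l\in \K_l$.

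For universality, given $\relsys{H}=(H,E_H)\in \Forb(\{C_3,C_5,\ldots,C_l\})$, define $d_H\colon H\times H\to$ even-odd pairs by setting $d_H(x,y)=(a,b)$ where $a$ is the length of the shortest even walk in $\relsys{H}$ from $x$ to $y$ (or $\omega$ if none) and $b$ is the length of the shortest odd walk (or $\omega$). Routine walk-concatenation arguments show that $d_H$ satisfies the three conditions of Definition \ref{evenoddp}, so $(H,d_H)$ is an even-odd metric space. The assumption that $\relsys{H}$ has no odd cycle of length $\leq l$ translates, via closed walks at any vertex, into $d_H(x,x)=(0,b)$ with $b>l$, so $(H,d_H)\in \K_l$. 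The universality and ultrahomogeneity of $\relsys{U}_l$ (a subspace of the generic $\relsys{U}$ of Lemma \ref{Ulema}) then supplies a distance-preserving embedding $\iota\colon (H,d_H)\hookrightarrow \relsys{U}_l$.

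It remains to verify that $\iota$ is a graph embedding $\relsys{H}\to\relsys{G}_l$. If $\{x,y\}\in E_H$ then the edge itself is an odd walk of length $1$, so the odd component of $d_H(x,y)$ equals $1$, hence $d_l(\iota(x),\iota(y))=(a,1)$ and $\{\iota(x),\iota(y)\}\in E_l$. Conversely, if $\{x,y\}\notin E_H$ then every odd walk between $x$ and $y$ in $\relsys{H}$ has length $\geq 3$, so the odd component of $d_H(x,y)$ is $\geq 3$ (or $\omega$), hence $\{\iota(x),\iota(y)\}\notin E_l$. The main technical point is the walk-to-metric translation (triangle inequality for the two-dimensional order, and the consistency between odd walks in the graph and the odd coordinate of the even-odd distance); once this bookkeeping is set up, both halves of the theorem reduce to properties of $\relsys{U}_l$ already established in Lemma \ref{Ulema}.
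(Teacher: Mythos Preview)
Your proposal is correct and follows essentially the same approach as the paper's proof: show $\relsys{G}_l$ has no short odd cycles via the even-odd triangle inequality, then pass from a given graph $\relsys{H}$ to its even-odd distance metric, embed into $\relsys{U}_l$, and read off the graph embedding. You are in fact more explicit than the paper on several points (checking edges and non-edges are preserved); the only step you leave implicit is that the self-distance condition $d_H(x,x)=(0,b)$ with $b>l$ forces the full $\K_l$ condition $a+b>l$ for all pairs, which follows immediately since an even walk and an odd walk between $x$ and $y$ concatenate to an odd closed walk at $x$.
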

\begin{proof}
Graph $\relsys{G}_l$ is omitting odd cycles up to length $l$ from the fact that any two
vertices $x,y$ on a odd cycle of length $k$ have distance $d(x,y)=(a,b)$ where
$a+b$ is at most $l$.

Now consider any countable graph $\relsys{G}=(V,E)$ omitting odd cycles of length at most
$l$. Construct corresponding even-odd distance metric space $(V,d_\relsys{G})$. By universality argument  $(V,d_\relsys{G})$ is subspace of $\relsys{U}_l$ and thus also $\relsys{G}$ subgraph of $\relsys{G}_l$.
\end{proof}

The explicit construction of the Urysohn space $\relsys{U}$ without using \Fraisse{} limit
argument, is described in \cite{Hubicka2008} and same technique can be carried to
even-odd metric. This is captured by the following definition.

\begin{defn}
\label{defU}
The vertices of $\U$ are functions $f$ such that:
\begin{enumerate}
\item[(1)] The domain $D_f$ of $f$ is finite (possibly empty) set of functions and $\emptyset$.
\item[(2)] The range of $f$ are even-odd pairs.
\item[(3)] For every $g\in D_f$ and $h\in D_{g}$, we have $h\in D_f$.
\item[(4)] $D_f$ using metric $d_{\U}$ defined bellow forms a even-odd metric
space.
\item[(5)] $f$ defines an extension of even-odd metric space on vertices $D_f$ by adding a new vertex. This means that $f(\emptyset)=(0,x)$ and for every
$g, h\in D_f$ we have $f(g)+f(h)\leq d_{\U}(g,h)$ and $f(g)\geq f(h)+d_\U(g,h)$.

\end{enumerate}
The metric $d_{\U}(f,g)$ is defined by:
\begin{enumerate}
  \item if $f=g$ then
$d_{\U}(f,g)=f(\emptyset),$
  \item if $f\in D_g$ then
$d_{\U}(f,g)=g(f),$
  \item if $g\in D_f$ then 
$d_{\U}(f,g)=f(g),$
  \item if none of above holds then
$d_{\U}(f,g)=min_{h\in D_f\cap D_g} f(h)+g(h).$

Minimum is taken element-wise on the pairs.
\end{enumerate}
 \end{defn}

\begin{thm}\label{generic}
$(\U,d_\U)$ is the generic even-odd metric space.
\end{thm}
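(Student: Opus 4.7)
The plan is to verify three things for $(\U,d_\U)$: (i) it really is an even-odd metric space, (ii) it is countable, and (iii) it has the one-point extension property with respect to the class of all finite even-odd metric spaces. Once these are established, the uniqueness of the \Fraisse{} limit (whose existence was shown in Lemma \ref{Ulema}) forces $(\U,d_\U)$ to be isomorphic to the generic even-odd metric space.

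First I would check that $d_\U$ is well defined and that the four clauses of the definition of $d_\U$ do not conflict on any pair $(f,g)$. Clauses (2) and (3) both apply when $f\in D_g$ and $g\in D_f$, but the consistency condition (4) built into membership in $\U$ (applied recursively to the function with the larger domain) forces the two expressions to coincide. Symmetry follows by checking the four cases, and the triangle inequality $d_\U(f,g)\leq d_\U(f,h)+d_\U(h,g)$ is verified by another case split, where in each case condition (5) of Definition \ref{defU} (which is exactly the ``$f$ defines a one-point extension'' requirement) provides the required inequality on the values of $f$ and $g$ at common arguments. Countability of $\U$ is immediate by induction on the rank of a function: the vertices of finite rank form a countable set, and by condition (1) every $f\in\U$ has finite rank.

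The main step is the one-point extension property. Let $S=\{g_1,\ldots,g_n\}$ be a finite subset of $\U$ forming an even-odd metric subspace under $d_\U$, and let $\delta\colon S\to\{\text{even-odd pairs}\}$ be a prescription of distances such that $\{S\cup\{*\},d'\}$ (with $d'(g_i,*)=\delta(g_i)$ and $d'$ agreeing with $d_\U$ on $S$) is an even-odd metric space. The natural candidate is the function $h$ with $D_h=S$, $h(\emptyset)=(0,b)$ for a large enough odd $b$, and $h(g_i)=\delta(g_i)$. I will check that $h\in\U$: conditions (1)--(3) are immediate from the definition of $S$ and the fact that each $g_i\in \U$ already carries its own history in its domain (this is what condition (3) of Definition \ref{defU} gives transitively); condition (4) is precisely the assumption that $S$ is an even-odd metric subspace; and condition (5) is precisely the assumption that $\delta$ defines a legitimate one-point extension. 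Evaluating $d_\U$ gives $d_\U(h,g_i)=h(g_i)=\delta(g_i)$ by clause (3), as required.

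Finally, I would invoke the standard back-and-forth argument: a countable even-odd metric space satisfying the one-point extension property with respect to the amalgamation class $\M$ of Lemma \ref{Ulema} is isomorphic to the \Fraisse{} limit of $\M$, i.e.\ to the generic even-odd metric space. The one potentially delicate point is verifying consistency of clauses (2) and (3) in the definition of $d_\U$; everything else is a clean unwinding of the design of $\U$, which was set up precisely so that each of its vertices encodes a prescribed one-point extension of its domain.
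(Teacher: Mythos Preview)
The paper does not actually prove this theorem: immediately after the statement it only remarks that ``this may be seen as yet another incarnation of Kat\v{e}tov functions'' and gives references. Your strategy (verify the axioms of an even-odd metric space, check countability, establish the one-point extension property, and invoke the uniqueness of the \Fraisse{} limit from Lemma \ref{Ulema}) is precisely the standard Kat\v{e}tov argument and is the right way to go.

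There is, however, a genuine gap in your one-point extension step. You take $D_h=S=\{g_1,\dots,g_n\}$ and then claim condition (3) of Definition \ref{defU} is ``immediate''. It is not: condition (3) demands that $D_h$ be transitively closed (if $g\in D_h$ and $k\in D_g$ then $k\in D_h$), and an arbitrary finite subset $S\subseteq\U$ need not satisfy this. The repair is to let $D_h$ be the transitive closure $\bar S$ of $S\cup\{\emptyset\}$ under the ``element-of-domain'' relation (this is still finite, by induction on rank), and then to extend $h$ from $S$ to $\bar S\setminus S$ by the shortest-path rule
\[
h(k)=\min_{g\in S}\bigl(\delta(g)+d_\U(g,k)\bigr),
\]
the minimum taken componentwise on even-odd pairs. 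One then has to verify condition (5) on the enlarged domain; the triangle inequalities already assumed for $\delta$ on $S$, together with those holding in $\U$, supply what is needed. After this correction each $g_i$ still lies in $D_h$, so clause (3) of the definition of $d_\U$ gives $d_\U(h,g_i)=h(g_i)=\delta(g_i)$ as you want.

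A minor remark: your concern about clauses (2) and (3) of $d_\U$ conflicting is moot. The construction is well-founded (every $f\in\U$ has a finite rank), so one cannot have both $f\in D_g$ and $g\in D_f$; those two clauses never apply simultaneously.
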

Proof is analogous to \cite{Hubicka2008}.
This may be seen as yet another incarnation of Kat\v etov functions \cite{Uspenskij2008}, see also \cite{The2006}.

We shall also remark that the universal structures for $\F=\{C_l\}$ can also be constructed as metrically
homogeneous graphs, see \cite{Cherlin}. 

\bibliography{homuniv-restricted.bib}
\end{document}